\patchcmd\Gread@eps{\@inputcheck#1 }{\@inputcheck"#1"\relax}{}{}
\newcommand{\R}{{\mathbb R}}
\newcommand{\ren}{\R^{N}}
\newcommand{\N}{{\mathbb N}}
\newcommand{\bra}{\langle}
\newcommand{\ket}{\rangle}
\newcommand{\inn}{\text{  in   }}
\newcommand{\dyle}{\displaystyle}
\newcommand{\dint}{\dyle\int}
\renewcommand{\ge }{\geqslant}
\renewcommand{\geq }{\geqslant}
\renewcommand{\le }{\leqslant}
\renewcommand{\leq }{\leqslant}
\newtheorem{Theorem}{Theorem}[section]
\newtheorem{Corollary}[Theorem]{Corollary}
\newtheorem{Lemma}[Theorem]{Lemma}
\newtheorem{Proposition}[Theorem]{Proposition}
\theoremstyle{definition}
\newtheorem{Definition}[Theorem]{Definition}
\newtheorem{remark}[Theorem]{Remark}
\begin{document}
\title[Principal Eigenvalue of Mixed Fractional Problem]{Principal Eigenvalue of Mixed Problem for the Fractional Laplacian: Moving the Boundary Conditions}

\author[T. Leonori]{Tommaso Leonori}
\author[M. Medina]{Maria Medina}
\author[I. Peral]{Ireneo Peral}
\author[A. Primo]{Ana Primo}
\author[F. Soria]{Fernando Soria}

\address[Tommaso Leonori]{Departamento de An\'alisis Matematico,
Universidad de Granada, Granada, Espa\~na}
\email{{\tt leonori@ugr.es}}

\address[Maria Medina]{Facultad de Matem\'aticas, Pontificia Universidad Cat\'olica de Chile, Avenida Vicu\~na Mackenna 4860, Santiago, Chile}
\email{mamedinad@mat.uc.cl}

\address[Ireneo Peral, Ana Primo, Fernando Soria]{ Departamento de  Matem{\'a}ticas,  Universidad  Autónoma de  Madrid, 28049 Madrid, Espa\~na}
\email{{\tt ireneo.peral@uam.es}, {\tt ana.primo@uam.es}, {\tt fernando.soria@uam.es}}

\date{\today}

%\subjclass {35J20, 35J25, 31B10, 60J75, 35P20 }
\keywords {Mixed problems, fractional Laplacian, eigenvalues.}

\thanks{2010 {\it Mathematics Subject Classification AMS 2010. 35J20, 35J25, 31B10, 60J75, 35P20.} }
\thanks{Work partially supported by research grants MTM2013-40846-P and MTM2016-80474-P, MINECO, Spain. The second author was supported by a grant of FONDECYT-Postdoctorado, No. 3160077.}

 \begin{abstract}
We analyze the behavior of the eigenvalues of the following non local mixed problem
\begin{equation*}
\left\{
\begin{array}{rcll}
(-\Delta)^{s} u &=& \lambda_1(D) \ u &\inn\Omega,\\
u&=&0&\inn D,\\
\mathcal{N}_{s}u&=&0&\inn N.
\end{array}\right.
\end{equation*}
Our goal  is to construct different sequences of problems by modifying the configuration of the sets $D$ and $N$, and to provide sufficient and necessary conditions on the size and the location of these sets in order to obtain sequences of eigenvalues that in the limit recover the eigenvalues of the Dirichlet or Neumann problem. We will see that the non locality plays a crucial role here, since the sets $D$ and $N$ can have infinite measure, a phenomenon that does not appear in the local case (see for example \cite{D,D2,CP}).
 \end{abstract}

 %%%%%%%%%%%%%%%%%%%%%%%
%%%%%%%%%%%%%%%%%%%%%%%
%%%%%%%%%%%%%%%%%%%%%%%

\maketitle
\section{Introduction}
In the papers \cite{D, D2}, J. Denzler considers the following mixed Dirichlet-Neumann eigenvalue problem
\begin{equation}\label{local}
\left\{
\begin{array}{rcll}
-\Delta u&=&\lambda_1(D) u &\inn \Omega,\\[1.2 ex]
u&=&0 &\inn D,\\[0.4 ex]
\dfrac{\partial u}{\partial n}&=&0 &\inn N.
\end{array}
\right.
\end{equation}
Here $\Omega$ is a Lipschitz bounded domain in $\mathbb{R}^N,$  $D$, $N$ are submanifolds of $\partial \Omega$ such that
\begin{equation}\label{bcond}
\bar{D}\cup\bar{N}=\partial \Omega\;\;\mbox{ and }\;\;D\cap N=\emptyset,
\end{equation}
and $\lambda_1(D)$ is the first eigenvalue; that is, if $\displaystyle H^1_D(\Omega)=\{ u\in H^1(\Omega)\,|\, u=0 \hbox{  on  } D\subset\partial\Omega\}$, then
$$\lambda_1(D):=\inf_{u\in H_D^1(\Omega),\,\, u\not\equiv 0}  \frac{\dint_\Omega|\nabla u|^2\,dx}{\dint_\Omega | u|^2\,dx}\,.$$

In his paper, he studies the behavior of this eigenvalue according to the configuration of the sets with Dirichlet (or conversely, with Neumann) condition. More precisely, he constructs different examples describing the way in which the geometric arrangement  of the Dirichlet part (for a fixed measure) affects the size of the corresponding eigenvalue. Indeed, he shows the following property.

\begin{Theorem}[\cite{D2},  Theorem 5 and Theorem 6]\label{denzlerInf}
Given $0<\alpha\leq |\partial\Omega|$, and
$$\mu:=\inf\{\lambda_1(D):\;|D|=\alpha\}>0,$$
there exists a configuration set $D_0\subset \partial\Omega$ with $|D_0|=\alpha$ such that
$$\lambda_1(D_0)=\mu.$$
\end{Theorem}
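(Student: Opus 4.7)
The plan is to apply the direct method of the calculus of variations. First I would fix a minimizing sequence $D_n\subset\partial\Omega$ with $|D_n|=\alpha$ and $\lambda_1(D_n)\to\mu$, and for each $n$ choose a normalized first eigenfunction $u_n\in H^1_{D_n}(\Omega)$ with $\|u_n\|_{L^2(\Omega)}=1$. The Rayleigh quotient then gives $\int_\Omega|\nabla u_n|^2=\lambda_1(D_n)$, so $\{u_n\}$ is bounded in $H^1(\Omega)$. Using Rellich's theorem and the compactness of the trace embedding $H^1(\Omega)\hookrightarrow L^2(\partial\Omega)$, I can extract a subsequence such that $u_n\weakly u$ in $H^1(\Omega)$ and $u_n\to u$ strongly both in $L^2(\Omega)$ and in $L^2(\partial\Omega)$. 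In particular $\|u\|_{L^2(\Omega)}=1$, so $u\not\equiv 0$.

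Next, since $\{\chi_{D_n}\}$ is bounded in $L^\infty(\partial\Omega)$, a further subsequence satisfies $\chi_{D_n}\weakly^{*} f$ in $L^\infty(\partial\Omega)$ for some $f$ with $0\leq f\leq 1$ and $\int_{\partial\Omega}f=\alpha$. The crux is the identity $u_n\chi_{D_n}=0$ on $\partial\Omega$, valid for every $n$. Combining strong $L^2$-convergence of the boundary traces with the weak-$*$ convergence of $\chi_{D_n}$, I can pass to the limit in $\int_{\partial\Omega}u_n\chi_{D_n}\varphi=0$ for every $\varphi\in L^2(\partial\Omega)$ and conclude that $uf=0$ a.e.\ on $\partial\Omega$. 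Since $f\leq 1$, this yields $|\{f>0\}|\geq\int_{\partial\Omega}f=\alpha$, and $u$ vanishes on this whole set.

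I would then define $D_0$ as any measurable subset of $\{f>0\}$ with $|D_0|=\alpha$. By construction $u\in H^1_{D_0}(\Omega)$, and by weak lower semicontinuity of the Dirichlet energy together with strong $L^2$ convergence,
$$\lambda_1(D_0)\leq\frac{\int_\Omega|\nabla u|^2}{\int_\Omega u^2}\leq\liminf_{n\to\infty}\frac{\int_\Omega|\nabla u_n|^2}{\int_\Omega u_n^2}=\mu.$$
Since $|D_0|=\alpha$, the definition of $\mu$ yields $\lambda_1(D_0)\geq\mu$, and equality holds.

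The main obstacle is that the class $\{D\subset\partial\Omega:|D|=\alpha\}$ is not closed under the only natural convergence available (weak-$*$ of indicator functions), since a weak-$*$ limit of characteristic functions is in general only $[0,1]$-valued rather than an indicator. What rescues the argument is the pointwise identity $uf=0$, which converts the defect of indicator character of $f$ into the measure inequality $|\{f>0\}|\geq\alpha$, thus allowing us to recover an honest configuration $D_0$ on which the limit function $u$ still vanishes in the trace sense.
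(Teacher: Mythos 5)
The paper cites this result as Theorems 5 and 6 of Denzler's article \cite{D2} and does not prove it, so there is no in-paper proof to compare against. That said, your argument is correct, and it is a clean direct-method reconstruction whose key idea --- using the strong $L^2(\partial\Omega)$ convergence of the traces together with the weak-$*$ convergence of $\chi_{D_n}$ to pass to the limit in the pointwise identity $u_n\chi_{D_n}=0$ and obtain $uf=0$, then using $f\le 1$ and $\int_{\partial\Omega}f=\alpha$ to extract an honest set $D_0\subset\{f>0\}$ of measure $\alpha$ on which the limit function $u$ vanishes --- is exactly what makes the direct method work here despite the lack of closedness of the admissible class. Two small remarks: (i) the compactness of the trace embedding $H^1(\Omega)\hookrightarrow L^2(\partial\Omega)$ is what you need, and this holds because $\Omega$ is bounded Lipschitz (trace lands in $H^{1/2}(\partial\Omega)$, which embeds compactly into $L^2(\partial\Omega)$); it is worth stating since it is the one non-obvious compactness used. (ii) Although the theorem formally assumes $\mu>0$, your argument actually implies it independently: if $\mu=0$ the limit $u$ would be a nonzero constant, whose trace is a nonzero constant, which contradicts $uf=0$ and $\int_{\partial\Omega}f=\alpha>0$. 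Denzler's own proof is organized differently (arguing compactness in terms of the sets and using a Poincar\'e constant depending monotonically on the domain), but your eigenfunction-centred variant is equally valid and arguably more transparent.
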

That is, for an admissible value $\alpha$ and all the possible configurations  in problem \eqref{local} of the boundary conditions with the Dirichlet part given by a set of measure equal to $\alpha$, the infimum of the corresponding eigenvalues is positive and can be attained. In other words, there exists a configuration whose associated eigenvalue is this infimum.

We also have the following result
\begin{Theorem}[\cite{D}, Theorem 8]\label{denzlerSup}
For every $0<\alpha\leq |\partial\Omega|$,
$$\sup \{\lambda_1(D):|D|=\alpha\}=\lambda_1(\partial \Omega).$$
Moreover, a maximizing sequence  $D_n$ is given in such a way that the corresponding characteristic functions $\chi_n$
weakly converge  to a constant in $L^2(\partial\Omega)$.
\end{Theorem}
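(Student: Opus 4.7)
The plan is to prove the bound in two steps: first the inequality $\sup\le \lambda_1(\p\O)$ by a soft monotonicity argument, then the reverse inequality by exhibiting an explicit maximizing sequence that simultaneously realizes the stated weak convergence. The easy direction is immediate from the variational characterization: for any $D\subset\p\O$ one has $H^1_0(\O)=H^1_{\p\O}(\O)\subset H^1_D(\O)$, so the Rayleigh quotient is infimized over a larger set when $D$ is smaller, yielding $\lambda_1(D)\le \lambda_1(\p\O)$.

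For the constructive part, set $c:=\alpha/|\p\O|\in(0,1]$ and, for each $n$, partition $\p\O$ into measurable pieces $\{E_i^n\}_{i=1}^{k_n}$ of diameter at most $1/n$ with $|E_i^n|>0$. Inside each $E_i^n$ pick a measurable subset $F_i^n\subset E_i^n$ with $|F_i^n|=c\,|E_i^n|$ and set $D_n:=\bigcup_i F_i^n$, so that $|D_n|=\alpha$. Testing $\chi_{D_n}$ against a continuous $\phi\in C(\p\O)$ and using that $\phi$ oscillates by at most $\e$ on each $E_i^n$ for $n$ large yields $\int_{\p\O}\chi_{D_n}\phi\,d\sigma\to c\int_{\p\O}\phi\,d\sigma$; the uniform bound $\|\chi_{D_n}\|_{L^\iy}\le 1$ together with the density of $C(\p\O)$ in $L^2(\p\O)$ upgrades this to $\chi_{D_n}\weakly c$ weakly in $L^2(\p\O)$.

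It remains to prove $\lambda_1(D_n)\to \lambda_1(\p\O)$. Let $u_n\in H^1_{D_n}(\O)$ be a normalized first eigenfunction, $\|u_n\|_{L^2(\O)}=1$. Since $\lambda_1(D_n)\le \lambda_1(\p\O)$, the sequence $\{u_n\}$ is bounded in $H^1(\O)$, so along a subsequence $u_n\weakly u$ in $H^1(\O)$, $u_n\to u$ strongly in $L^2(\O)$ (in particular $\|u\|_{L^2(\O)}=1$, so $u\not\equiv 0$), and $u_n\to u$ strongly in $L^2(\p\O)$ by compactness of the trace. Passing to the limit in the identity $u_n\chi_{D_n}=0$ on $\p\O$: for any $\psi\in L^\iy(\p\O)$,
$$\int_{\p\O}\psi\, u_n\chi_{D_n}\,d\sigma=\int_{\p\O}\psi\, u\,\chi_{D_n}\,d\sigma+\int_{\p\O}\psi(u_n-u)\chi_{D_n}\,d\sigma\longrightarrow c\int_{\p\O}\psi\, u\,d\sigma,$$
the first term using $\psi u\in L^1(\p\O)$ and the weak-$*$ convergence $\chi_{D_n}\weakly c$ in $L^\iy$, the second by the strong $L^2(\p\O)$ convergence of $u_n$. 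Hence $cu=0$ a.e.\ on $\p\O$, i.e.\ $u\in H^1_0(\O)$. Weak lower semicontinuity together with the Rayleigh characterization of $\lambda_1(\p\O)$ then gives
$$\lambda_1(\p\O)\le \frac{\dint_\O|\nabla u|^2\,dx}{\dint_\O u^2\,dx}\le \liminf_{n\to\iy}\lambda_1(D_n)\le \lambda_1(\p\O),$$
so $\lambda_1(D_n)\to \lambda_1(\p\O)$, completing the proof.

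The main technical hurdle is the passage to the limit in the Dirichlet condition $u_n\chi_{D_n}=0$: the compactness of the trace $H^1(\O)\hookrightarrow L^2(\p\O)$ is exactly what converts the merely weak convergence $\chi_{D_n}\weakly c$ into the pointwise identity $u=0$ on $\p\O$. Without strong trace convergence one could only reach a distributional statement that would not constrain $u$ on the boundary; with it, the positivity of $c$ forces $u\in H^1_0(\O)$ and the remainder is standard spectral theory.
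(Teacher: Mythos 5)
This theorem is stated in the paper only as a quotation of Denzler's result (\cite{D}, Theorem 8); no proof appears, so there is no internal argument to compare against. Assessed on its own merits, your proof is correct, and its structure --- the monotonicity $H^1_0(\Omega)\subset H^1_D(\Omega)$ for the upper bound, a finely equidistributed family $D_n$ at density $c=\alpha/|\partial\Omega|$ for the weak convergence $\chi_{D_n}\rightharpoonup c$, and passage to the limit via compactness of the trace $H^1(\Omega)\hookrightarrow L^2(\partial\Omega)$ to force $u\in H^1_0(\Omega)$ and then lower semicontinuity --- is a clean realization of the standard strategy, which is also the one Denzler's original proof is built around. The one technical point you implicitly invoke and should perhaps spell out is that the extraction is by subsequence, so the conclusion ``$\lambda_1(D_n)\to\lambda_1(\partial\Omega)$'' for the whole sequence follows from the fact that every subsequence has a sub-subsequence converging to the same limit $\lambda_1(\partial\Omega)$; this is routine but deserves a phrase. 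Everything else --- the non-atomicity of surface measure enabling the choice of $F_i^n$, the identification $H^1_0(\Omega)=\{u\in H^1(\Omega):u|_{\partial\Omega}=0\}$ for Lipschitz $\Omega$, and the splitting of $\int_{\partial\Omega}\psi\,u_n\chi_{D_n}$ into a ``weak $\times$ fixed'' term plus a ``strong $\to 0$'' remainder --- is handled correctly, and your closing observation that strong trace convergence is exactly what converts the weak-$*$ convergence of $\chi_{D_n}$ into a pointwise constraint on $u$ is the right way to understand why the argument works.
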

This theorem states that with the Dirichlet conditions, a tiny set as small as needed can be chosen in such a way that the eigenvalue problem behaves {\it almost} like the whole Dirichlet problem.
The physical interpretation of this property can be better explained by saying that for every $0<\alpha\le |\partial\Omega|$ we have a configuration in which the corresponding heat flux reflects an \textit{almost non isolated} situation.

Other kind of results concerning the configurations of the sets $D$ and $N$ along $\partial \Omega$ can be found in \cite{CP}. There, the authors consider sequences of sets with Dirichlet condition, $\{D_k\}_{k\in\N}$, and with Neumann condition, $\{N_k\}_{k\in\N}$, satisfying \eqref{bcond} for every $k$. They prove that if the sets are nested and their measure tends to zero, in the limit we recover the eigenvalue of the problem with the other condition in the whole boundary of $\Omega$. Namely, if
$$|N_k|\rightarrow 0\mbox{ when }k\rightarrow\infty\;\;\mbox{ and }\;\;N_{k+1}\subseteq N_k,$$
then $\lambda_1(D_k)\rightarrow \lambda_1(\partial\Omega)$.
Conversely, if
$$|D_k|\rightarrow 0\mbox{ when }k\rightarrow\infty\;\;\mbox{ and }\;\;D_{k+1}\subseteq D_k,$$
then $\lambda_1(D_k)\rightarrow 0$.
\medskip

In the spirit of these local results, the aim of this paper is to obtain necessary and sufficient conditions in order to construct sequences of boundary data to approximate the Dirichlet or Neumann eigenvalue in the non local setting. In the process we study how the size and position of the sets determine the eigenvalue of the associated mixed problem.

More precisely, we deal with the fractional Laplacian  $(-\Delta)^{s}$, with $0<s<1$, that is defined on smooth functions as follows:
\begin{equation}\label{frac}
(-\Delta)^{s}u(x)= a_{N,s} \dint_{\ren} \frac{u(x)-u(y)}{|x-y|^{N+2s}}\,dy,
\end{equation}
where the integral above has to be understood in the principal value sense and
$$
a_{N,s}=\left(\int_{\mathbb{R}^{N}}{\dfrac{1-cos(\xi_1)}{|\xi|^{N+2s}}d\xi}\right)^{-1}=2^{2s-1}\pi^{-\frac N2}\frac{\Gamma(\frac{N+2s}{2})}{|\Gamma(-s)|}\,,
$$
where $\Gamma$ denotes the Gamma Function.
As before, we are interested in the eigenvalue problem associated to such operator in a  bounded   domain.  Here and in all the paper (unless  explicitly specified) we will consider that $\Omega$ is a $C^{1,1}$ domain on which we assume the exterior sphere condition. To fix it once for all, we give the following definition.

\begin{Definition}
We say that $\Omega$ is an \textit {admissible} domain in this context if it is a $C^{1,1}$ domain with the exterior sphere condition.
\end{Definition}

As far as the boundary conditions are concerned, we suppose that the complementary of $\Omega$ is divided into two sets in which we prescribe the Dirichlet and the Neumann conditions.
In fact, we suppose that  $N$ and $D$ are two open sets of positive measure satisfying
\begin{equation} \label{DN}
D,N \subset \Omega^c, \quad D\cap N=\emptyset, \ \ \ \ \left | \mathbb R^N\setminus (\Omega\cup D \cup N)\right |=0   \,.
\end{equation}

Thus we consider the following problem:
\begin{equation}\label{prob}
\left\{
\begin{array}{rcll}
(-\Delta)^{s} u &=& \lambda_1(D) \ u &\inn\Omega,\\
u&=&0&\inn D,\\
\mathcal{N}_{s}u&=&0&\inn N,
\end{array}\right.
\end{equation}
where  $\mathcal{N}_{s}$ denotes  the \textit{nonlocal normal derivative}.

Several definitions  of {non local normal derivative} can be found in the literature. We   use the one proposed by S. Dipierro, X. Ros-Oton and E. Valdinoci in \cite{DRV}, given on smooth functions $u$  by
\begin{equation}\label{normal}
\mathcal{N}_{s}u(x):=a_{N,s} \dint_{\Omega} \dfrac{u(x)-u(y)}{|x-y|^{N+2s}}\,dy, \qquad  x\in \Omega^c .
\end{equation}
The use of this Neumann boundary condition is  justified, among others,  by two reasons:
\begin{enumerate}
\item[(i)] A Gauss-type formula holds (see \eqref{gauss} below);

\item[(ii)] The problem admits a variational formulation   (see \eqref{hilbert}).

\end{enumerate}
As far as the first eigenvalue is concerned, we define $\lambda_1 (D) $ as
$$
\lambda_1 (D) =\inf\limits_{u\in H^s_D,\,\|u\|_{L^2(\Omega)}=1} \iint_{\R^{2N} \setminus (\Omega^c)^2 } \dfrac{|u(x)-u(y)|^2}{|x-y|^{N+2s}}\,dxdy,
$$
where $H^s_D(\Omega):=   \{ u \in H^s (\Omega)  \mbox{ such that }\   u=0\inn D  \}$.

\medskip

Let us observe that  the study of the different configurations of $D$ and $N$ in \eqref{prob} is much more involved in the fractional case than in the local one, since the role played by the  {\it boundary} of $\Omega$ in now replaced by  the whole $\Omega^c$, and both sets $D$ and $N$ may change  in many different ways.
 Indeed we have to take care not only of the size of the sets $N$ and $D$ (that are allowed to be, one or both of them, of infinite Lebesgue measure)  but also of the shape and, in some sense, of how far they are located with respect to  $\Omega$.

 \medskip

 The first  result we prove deals with the characterization of how to arrange  a sequence of domains $\{N_k\}_{k\in \mathbb{N}}$, in which the Neumann condition is prescribed, in order to prove that the corresponding (first) eigenvalue gets close to the one with Dirichlet condition in the whole of $\Omega^c$.

  \medskip

 Our main result in this direction is the following.

 \begin{Theorem}\label{theoNeu}
Let $\Omega$ be admissible  and consider  $D_k,\, N_k\subset \Omega^c$  open sets such  that
\begin{equation}\label{nkdk}
D_k,N_k \subset \Omega^c, \quad D_k\cap N_k=\emptyset, \ \ \ \ \left | \mathbb R^N\setminus (\Omega\cup D_k \cup N_k)\right |=0  .
\end{equation}
Then,  the following two statements are equivalentes:
\begin{enumerate}
\item [(A)] $\displaystyle\lim\limits_{k\rightarrow \infty}\lambda_{1} (D_k) =\lambda_{1} (\Omega^c) $;\\[1mm]
\item  [(B)] The Neumann sets $\{N_{k}\}_{k\in \N}$ diffuse to zero on compact sets; that is, $\forall R>0$,
\\[2mm]
$ \displaystyle \lim\limits_{k\rightarrow\infty} |N_{k}\cap B_{R}|=0,$ where  $B_R$ denotes the ball of radius $R$ centered at the origin.
\end{enumerate}
\end{Theorem}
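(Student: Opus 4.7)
The plan is to prove both implications in turn, with $(B)\Rightarrow(A)$ being much more direct than the converse.

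For $(B)\Rightarrow(A)$, first observe the universal upper bound $\lambda_1(D_k)\le \lambda_1(\Omega^c)$ coming from the inclusion $H^s_{\Omega^c}\subset H^s_{D_k}$. Let $u_k$ be normalised positive first eigenfunctions; these are bounded in the natural fractional Hilbert space since the Gagliardo energy of $u_k$ equals $\lambda_1(D_k)\le\lambda_1(\Omega^c)$ with $\|u_k\|_{L^2(\Omega)}=1$. Up to a subsequence $u_k\weakly u$, and by fractional Rellich compactness $u_k\to u$ strongly in $L^2(B_R)$ for each $R>0$, so in particular $\|u\|_{L^2(\Omega)}=1$. The crucial step is showing $u\equiv 0$ a.e.\ on $\Omega^c$: since $u_k$ vanishes on $D_k$ and the fractional Sobolev embedding yields a uniform bound $\|u_k\|_{L^{2^\ast_s}(B_R)}\le C$, H\"older's inequality gives
\[
\int_{\Omega^c\cap B_R} u_k^2 \;=\; \int_{N_k\cap B_R} u_k^2 \;\le\; C^2\,|N_k\cap B_R|^{1-2/2^\ast_s} \;\to\; 0
\]
by hypothesis $(B)$, and strong $L^2_{\loc}$ convergence transfers this to $u=0$ a.e.\ on $\Omega^c$. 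Hence $u$ is admissible for $\lambda_1(\Omega^c)$, and weak lower semicontinuity of the Gagliardo seminorm gives $\lambda_1(\Omega^c)\le \mathcal{E}(u)\le\liminf_k\lambda_1(D_k)$, closing this direction.

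For $(A)\Rightarrow(B)$ I argue by contradiction: suppose there exist $R_0,\delta>0$ and a subsequence (relabelled $u_k$) with $|N_k\cap B_{R_0}|\ge\delta$. The engine is the Neumann condition $\mathcal{N}_s u_k=0$ on $N_k$, which is the reproduction formula
\[
u_k(x) \;=\; \frac{\int_\Omega u_k(y)\,|x-y|^{-N-2s}\,dy}{\int_\Omega |x-y|^{-N-2s}\,dy}, \qquad x\in N_k.
\]
For $x\in B_{R_0}$ both kernels are uniformly comparable to positive constants depending only on $R_0$ and $\operatorname{diam}(\Omega)$, yielding the uniform lower bound $u_k(x)\ge c(R_0)\int_\Omega u_k$ on $N_k\cap B_{R_0}$. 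Passing to a subsequence with $u_k\weakly u$ and $u_k\to u$ strongly in $L^2(\Omega)$, positivity of $u$ together with $\|u\|_{L^2(\Omega)}=1$ force $\int_\Omega u>0$, so $u_k\ge c_0>0$ on $N_k\cap B_{R_0}$ for $k$ large, and therefore
\[
\int_{\Omega^c\cap B_{R_0}} u_k^2 \;\ge\; \int_{N_k\cap B_{R_0}} u_k^2 \;\ge\; c_0^2\,\delta.
\]

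The hard part is to turn this lower bound into a contradiction with $(A)$, since the constraint $u_k=0$ on $D_k$ does not pass cleanly to the weak limit when $\chi_{N_k}$ only converges weakly$^\ast$ to some $g$; what one extracts from the $L^1$-weak limit of $u_k\chi_{D_k}=0$ is the identity $u(1-g)=0$ on $\Omega^c$. When $|\{g=1\}|=0$ this forces $u\equiv 0$ on $\Omega^c$, and strong $L^2(B_{R_0})$ convergence then immediately contradicts the displayed lower bound. When $|\{g=1\}|>0$, I select via Lebesgue density a fixed open set $E\subset\{g=1\}\cap\Omega^c$ for which $|E\setminus N_k|\to 0$, and build an admissible competitor $v_k=\phi+\varepsilon\,\rho\,\chi_{N_k}$ for $\lambda_1(D_k)$, with $\rho\in C_c^\infty(E)$ nonnegative and $\phi$ the normalised Dirichlet eigenfunction. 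Using the Gauss-type formula and the strict negativity of $\mathcal{N}_s\phi$ on $\Omega^c$ (a consequence of $\phi>0$ in $\Omega$), the first-order term in $\varepsilon$ of the resulting Rayleigh quotient is strictly negative uniformly in $k$, yielding $\lambda_1(D_k)\le \lambda_1(\Omega^c)-\eta$ for some $\eta>0$ and contradicting $(A)$. The main technical obstacle here is the uniform control of the quadratic correction $\mathcal{E}(\rho\chi_{N_k})$, which I would handle by replacing the frozen cut-off with the optimal Neumann-extension of $\phi$ on $N_k$, i.e.\ by minimising the Gagliardo energy at fixed trace on $\Omega$.
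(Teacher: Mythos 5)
Your two directions are unevenly developed, and the second one has a genuine gap.

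\textbf{On $(B)\Rightarrow(A)$.} Your compactness/lower-semicontinuity route is sound in spirit and different from the paper's, which instead tests the two eigenvalue equations against each other's eigenfunctions to get the identity
$(\lambda_{1}-\lambda_{1}^{k})\int_\Omega\varphi_1 u_1^k
= a_{N,s}\int_{N_k}\int_\Omega\frac{\varphi_1(y)\,u_1^k(x)}{|x-y|^{N+2s}}\,dy\,dx$,
and then shows the right-hand side $\to 0$. One wrinkle in your write-up: you justify $\int_{N_k\cap B_R}u_k^2\to 0$ by asserting a uniform $\|u_k\|_{L^{2^*_s}(B_R)}$ bound via ``fractional Sobolev embedding.'' That embedding requires control of the Gagliardo seminorm over $\mathbb{R}^{2N}$, whereas what is bounded here is only the seminorm over $Q_\Omega=\mathbb{R}^{2N}\setminus(\Omega^c)^2$, i.e.\ the $\Omega^c\times\Omega^c$ interactions are missing. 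The correct (and simpler) ingredient is the uniform $L^\infty$ bound of Proposition \ref{u1k}(iii) together with the Jensen-type pointwise bound coming from the Neumann reproduction formula, which gives $\|u_k\|_{L^\infty(\Omega^c)}\le C$ uniformly. With this substitution your argument closes.

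\textbf{On $(A)\Rightarrow(B)$.} Here there is a real error, plus an unfinished plan. The claim that for $x\in N_k\cap B_{R_0}$ ``both kernels are uniformly comparable to positive constants depending only on $R_0$ and $\operatorname{diam}(\Omega)$'' is false: $\int_\Omega |x-y|^{-N-2s}\,dy$ blows up like $\mathrm{dist}(x,\partial\Omega)^{-2s}$ as $x\to\partial\Omega$, so the lower bound $u_k(x)\gtrsim \fint_\Omega u_k$ degenerates near $\partial\Omega$. The paper circumvents this by restricting to $N_k\cap B_R\setminus\Omega_\delta$, where $\Omega_\delta$ is a thin neighbourhood of $\partial\Omega$; since $|\Omega_\delta|\to 0$ as $\delta\to 0$, one can peel off a strip whose measure is below half of the assumed lower bound $\mu$ on $|N_k\cap B_R|$ and run the argument on the remainder. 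You need this fix. Beyond that, your plan to turn the resulting positivity of $u_k$ on $N_k\cap B_{R_0}$ into a contradiction (via weak$^\ast$ limits of $\chi_{N_k}$ and a perturbed competitor $\phi+\varepsilon\rho\chi_{N_k}$) is speculative; as you yourself note, the quadratic term in the Rayleigh quotient is not controlled, and the construction is not carried to completion. The paper's approach is shorter and avoids all of this: once the identity above is in hand, assuming $(A)$ forces the double integral over $N_k$ to vanish, and the lower bound
$J_k^2\ge C\left(\int_\Omega\varphi_1\right)|N_k\cap B_R\setminus\Omega_\delta|$
(with $C$ depending on $\delta,R$ but not $k$) immediately contradicts $|N_{k_j}\cap B_R|\ge\mu$ after choosing $\delta$ with $|\Omega_{\delta}|\le\mu/2$. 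In short: the dual identity, which you never write down, is what makes both implications one-step affairs, and you should use it.
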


 \medskip

Let us observe that here we have a different scenario with respect to the local setting. Indeed we can find several configurations of the sets $N_k$ and $D_k$ for which the above convergence holds true (see Section \ref{2} for more examples). Independently of the measures of the two  sets, we have convergence of the first eigenvalue with mixed boundary condition if, for instance, the Neumann sets \lq\lq travel" away to infinity. In other words, we have a sort of equivalence of the result obtained in \cite{CP} without requiring that the sets get nested.

 \medskip

Actually, for certain range of $s$, we have a similar result that assures  the convergence of the sequence $\{\lambda_1 (D_k)\}_k$ to zero.

\begin{Theorem}\label{introd}
Let $0<s<1/2$, $\Omega$  a domain and let $D_k,\, N_k\subset \Omega^c$ be  as in
\eqref{nkdk}. Then,
$$ \hbox{ for any }R>0,
\; \lim\limits_{k\rightarrow\infty} |D_{k}\cap B_{R}|=0 \,
\quad \Longleftrightarrow \quad \lim\limits_{k\rightarrow \infty}\lambda_{1} (D_k) =0\, ,
\;.$$
\end{Theorem}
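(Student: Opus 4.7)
My plan is to derive explicit upper and lower bounds on $\lambda_1(D_k)$ in terms of $|D_k\cap B_R|$; each of these gives one of the implications. The reverse implication does not actually use $s<1/2$, while the direct one does.

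\emph{Reverse implication} ($\lambda_1(D_k)\to 0 \Rightarrow |D_k\cap B_R|\to 0$). Fix $R>0$ and pick $R'\ge R$ so that $\Omega\subset B_{R'}$. Since $\Omega\times(D_k\cap B_R)\subset\R^{2N}\setminus(\Omega^c)^2$, since every $u\in H^s_{D_k}$ vanishes on $D_k$, and since $|x-y|\le 2R'$ when $x\in\Omega$ and $y\in D_k\cap B_R$, one has
\begin{equation*}
\iint_{\R^{2N}\setminus(\Omega^c)^2} \frac{|u(x)-u(y)|^2}{|x-y|^{N+2s}}\,dxdy \;\ge\; \int_\Omega \int_{D_k\cap B_R} \frac{|u(x)|^2}{|x-y|^{N+2s}}\,dy\,dx \;\ge\; \frac{|D_k\cap B_R|}{(2R')^{N+2s}}\,\|u\|_{L^2(\Omega)}^2.
\end{equation*}
Taking the infimum in $u$ gives $\lambda_1(D_k)\ge|D_k\cap B_R|/(2R')^{N+2s}$, from which the claim follows.

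\emph{Direct implication} ($|D_k\cap B_R|\to 0 \Rightarrow \lambda_1(D_k)\to 0$). I plan to produce a test function whose Rayleigh quotient tends to zero. For $M_k\to\infty$ to be chosen, take
\begin{equation*}
u_k := \chi_{B_{M_k}\setminus D_k}.
\end{equation*}
As soon as $\Omega\subset B_{M_k}$, one has $u_k\equiv 1$ on $\Omega$ (so $\|u_k\|_{L^2(\Omega)}^2=|\Omega|$) and $u_k\equiv 0$ on $D_k$, hence $u_k$ is admissible. Since the $\Omega\times\Omega$ contribution to the Gagliardo integral vanishes, the seminorm on $\R^{2N}\setminus(\Omega^c)^2$ equals
\begin{equation*}
2\int_\Omega\int_{D_k}\frac{dy\,dx}{|x-y|^{N+2s}} \;+\; 2\int_\Omega\int_{B_{M_k}^c}\frac{dy\,dx}{|x-y|^{N+2s}}.
\end{equation*}
The far-field term is bounded by $CM_k^{-2s}$. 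Splitting $D_k=(D_k\cap B_R)\cup(D_k\setminus B_R)$, the piece $D_k\setminus B_R$ is likewise $\le CR^{-2s}$, while Fubini together with the elementary estimate $\int_\Omega |x-y|^{-N-2s}\,dx\le C\,\mathrm{dist}(y,\Omega)^{-2s}$ (valid for $y\in\Omega^c$) reduces the compact piece to $C\int_{D_k\cap B_R}\mathrm{dist}(y,\Omega)^{-2s}\,dy$. Given $\e>0$, I would first choose $R$ large so that the tail terms are $<\e$, then invoke $|D_k\cap B_R|\to 0$ and absolute continuity of the integral to make the weighted term $<\e$ for $k$ large, and finally pick $M_k$ large. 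Together these drive $\lambda_1(D_k)$ to zero.

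\emph{Main obstacle.} The only subtle point, and where $s<1/2$ is crucial, is the last step above: $\mathrm{dist}(\,\cdot\,,\Omega)^{-2s}\in L^1(B_R\cap\Omega^c)$ precisely when $2s<1$, because for admissible $\Omega$ the tube $\{\mathrm{dist}(y,\Omega)<t\}$ has volume $O(t)$. This is also exactly what makes $\chi_{B_{M_k}\setminus D_k}$ produce a finite Gagliardo seminorm. For $s\ge 1/2$ the indicator $\chi_\Omega$ already fails to be in $H^s$, so a genuinely different test function would be required; this range dependence is the true technical delicacy of the statement.
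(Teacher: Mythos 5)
Your proof is correct, and it takes a genuinely different route from the paper's in both directions, so let me compare. For the reverse implication, the paper (Theorem \ref{N1}) first pairs the mixed problem against the constant Neumann eigenfunction $\psi_1\equiv|\Omega|^{-1/2}$ to derive the identity $\lambda_1^k\int_\Omega\psi_1 u_1^k = a_{N,s}\int_{D_k}\int_\Omega\frac{\psi_1(x)u_1^k(y)}{|x-y|^{N+2s}}dydx$, and then bounds the right side from below by $c\,|D_k\cap B_R|$ using the uniform lower bound $\fint_\Omega u_1^k\ge c$ of Proposition \ref{u1k}(iv); you instead drop all but the $\Omega\times(D_k\cap B_R)$ contribution directly from the Rayleigh quotient and use only the normalization $\|u\|_{L^2(\Omega)}=1$. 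Both give the same quantitative bound $\lambda_1(D_k)\ge c\,|D_k\cap B_R|$, but yours is more elementary and does not need the duality identity or properties of the eigenfunctions. For the forward implication, the paper (Theorem \ref{N2}) again argues through the identity, replacing $u_1^k$ by its uniform $L^\infty$ bound (Proposition \ref{u1k}(iii), which rests on a nontrivial Moser iteration) and then invoking Lemmas \ref{punctual} and \ref{regularidad}; you bypass the eigenfunctions altogether by exhibiting the explicit test function $\chi_{B_{M_k}\setminus D_k}$, for which the $\Omega\times\Omega$ part of the Gagliardo seminorm vanishes identically and the remaining pieces reduce to exactly the same two lemmas. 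This is a cleaner, self-contained argument that makes the mechanism of the restriction $s<1/2$ very visible: you need $\mathrm{dist}(\cdot,\partial\Omega)^{-2s}\in L^1_{loc}(\Omega^c)$, equivalently $\chi_\Omega\in H^s$, precisely as noted in Remark \ref{chi} of the paper.

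Two small points of rigor you should fix in a final write-up. First, in the decomposition of the seminorm of $\chi_{B_{M_k}\setminus D_k}$, the pair $(x,y)\in\Omega\times\Omega^c$ contributes when $y\in D_k\cup B_{M_k}^c$, so the identity you wrote should be an inequality $\le 2\int_\Omega\int_{D_k}+2\int_\Omega\int_{B_{M_k}^c}$ (these two regions may overlap); this is harmless since you only need an upper bound. Second, the statement in the paper is phrased with ``$\Omega$ a domain,'' but both your argument and the paper's require the regularity encoded in the admissibility assumption (needed for Lemma \ref{regularidad}, i.e.\ for the boundary tube $\{\mathrm{dist}(y,\partial\Omega)<t\}$ to have volume $O(t)$); you do acknowledge this informally, but it is worth saying explicitly that the proof uses the $C^{1,1}$ hypothesis.
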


(For  more precise statements on this problem, see Theorems \ref{N1} and \ref{N2} in Section \ref{secDir}.)

\medskip

Even in this case the situation is quite different from the one treated by J. Denzler in \cite{D,D2}:  indeed, given a fixed positive value, we can find a configuration where the measure of the Dirichlet set has that particular value and  such  that the associated eigenvalue is as small as we want, in clear contrast to what happens in the local case (see Theorem  \ref{denzlerInf}).

\medskip

\medskip

The paper is organized as follows. In Section \ref{1} we show the functional framework associated to the non local problem \eqref{prob} and the Neumann boundary condition, as well as some integrability results related to the geometry of $\Omega$ that will be fundamental in the study of the convergence of the sequences of eigenvalues. Section \ref{2} is devoted to analyze the non local mixed eigenvalue problem. We describe the main properties of the associated eigenvalues, and we provide examples of different possible configurations for the sets $D$ and $N$. These examples motivate the results obtained in the following sections. In Section \ref{secNeu} we consider the case when the sets with Neumann boundary condition {\it tend to disappear}, that is, we study sequences of problems where the measure of the Neumann part tends to zero, or the set goes far from $\Omega$. Finally, in Section \ref{secDir} we perform the opposite analysis, that is, when the Dirichlet part {\it decreases}.

%%%%%%%%%%%%%%%%%%%%%%%
%%%%%%%%%%%%%%%%%%%%%%%
%%%%%%%%%%%%%%%%%%%%%%%

\section{Functional framework and preliminary results.}\label{1}
\setcounter{equation}{0}

Consider the general non local elliptic mixed problem,
\begin{equation}\label{probl}
\left\{
\begin{array}{rcl}
(-\Delta)^{s} u &=& f \;\;\inn\Omega,\\
u&=&0\;\;\inn D,\\
\mathcal{N}_{s}u&=&0\;\;\inn N,
\end{array}\right.
\end{equation}
where $\Omega$ is a bounded Lipschitz domain of $\ren$, $s\in (0,1)$ and $\mathcal{N}_s$ is the Neumann condition defined in \eqref{normal}. Here $N$ and $D$ are two open sets of positive measure satisfying
$$
D\cap N=\emptyset, \ \ \ \ \bar{D}\cup\bar{N}=\Omega^c,
$$
and $f\in \mathcal{C}_{0}^{\infty}(\Omega)$, $f\geq 0$.

Let $u,v: \mathbb{R}^{N}\rightarrow\mathbb{R}$ be measurable functions and denote $Q_\Omega:= \mathbb{R}^{2N}\setminus (\Omega^c)^{2}$. Consider the scalar product
\begin{equation}\label{scalar}
\bra u, v\ket_{H^{s}_{D}(\Omega)}:= \dint_{\Omega} u v\,dx+\iint_{Q_\Omega} \dfrac{ (u(x)-u(y))(v(x)-v(y))}{|x-y|^{N+2s}}\,dx dy,
\end{equation}
and the associated norm
$$
\|u\|^{2}_{H^{s}_{D}(\Omega)}:= \dint_{\Omega} u^2\,dx+\iint_{Q_\Omega} \dfrac{ |u(x)-u(y)|^2}{|x-y|^{N+2s}}\,dx dy.
$$
Thus, we define the space
\begin{equation}\label{hilbert}
H^{s}_{D}(\Omega) := \{ u: \mathbb{R}^{N}\rightarrow \mathbb{R} \mbox{ measurable, such that } \|u\|_{H^{s}_{D}(\Omega)}<\infty  \mbox { and } u=0\inn D\},
 \end{equation}
that is a Hilbert space with the scalar product defined in \eqref{scalar}.

As we pointed out before, an advantage of the definition \eqref{normal} is that problem \eqref{probl} has a variational structure. In particular, for $u$ and $v$ bounded $C^2$ functions in $\mathbb{R}^{N}$, the classical integration by parts formulae for the Laplacian operator,
$$
\dint_{\Omega} \Delta u \,dx=\dint_{\partial\Omega} \partial_{\nu} u\, d\sigma
\qquad \mbox{ and }
\qquad
\dint_{\Omega} \nabla u\nabla v\,dx= \dint_{\Omega} v (-\Delta) u\,dx+\dint_{\partial\Omega} v \ \partial_{\nu}u\,d\sigma,
$$
are replaced by
\begin{equation}\label{gauss}
\dint_{\Omega} (-\Delta)^{s} u\,dx=-\dint_{\mathbb{R}^{N}\setminus\Omega} \mathcal{N}_{s} u\,dx
\end{equation}
and
\begin{equation}\label{parts}
\dfrac{a_{N,s}}{2} \iint_{Q_\Omega} \dfrac{(u(x)-u(y))(v(x)-v(y))}{|x-y|^{N+2s}}dxdy=\dint_{\Omega} v (-\Delta)^{s}u\, dx+\dint_{\mathbb{R}^{N}\setminus\Omega} v \mathcal{N}_{s}u\, dx
\end{equation}
(see Lemma 3.2 and Lemma 3.3 in \cite{DRV}). Actually, these identities motivate the notion of solution  we  use, which is the following.
\begin{Definition}
Let $f\in L^{2}(\Omega)$. We say that $u\in H_{D}^{s}(\Omega)$ is a weak solution of \eqref{probl} if
$$
\dfrac{a_{N,s}}{2}\iint_{Q_\Omega} \dfrac{(u(x)-u(y))(\varphi(x)-\varphi(y))}{|x-y|^{N+2s}}\, dxdy=\dint_{\Omega} f\varphi\,dx,
$$
for every $\varphi\in H_{D}^{s}(\Omega)$.
\end{Definition}
Furthermore, taking advantage of the fact of having Dirichlet boundary condition on some region, one can establish a Poincar\'e type inequality (see Proposition 2.4 in \cite{BM}).
\begin{Proposition}\label{Poin} (Poincar\'e inequality) There exists a constant $C=C(\Omega, N,s)>0$ such that
$$
\dint_{\Omega} u^2\,dx\leq C \iint_{Q_\Omega} \dfrac{|u(x)-u(y)|^2}{|x-y|^{N+2s}}\,dxdy,
$$
for every $u\in H_{D}^{s}(\Omega)$.
\end{Proposition}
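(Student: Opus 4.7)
I would prove this by contradiction combined with the standard compactness argument used for Poincar\'e-type inequalities. Assume no such constant $C$ exists. Then for every $n\in\N$ there is $u_n\in H^s_D(\Omega)$ with $\|u_n\|_{L^2(\Omega)}=1$ and
$$[u_n]_s^2:=\iint_{Q_\Omega}\frac{|u_n(x)-u_n(y)|^2}{|x-y|^{N+2s}}\,dx\,dy\longrightarrow 0.$$

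Since $\Omega\times\Omega\subset Q_\Omega$, the restriction of $u_n$ to $\Omega$ is bounded in $H^s(\Omega)$: indeed, $\|u_n\|_{L^2(\Omega)}=1$ and the intrinsic Gagliardo seminorm $[u_n]_{H^s(\Omega)}^2$ is dominated by $[u_n]_s^2\to 0$. As $\Omega$ is Lipschitz, the embedding $H^s(\Omega)\hookrightarrow L^2(\Omega)$ is compact, so along a subsequence $u_n\to u$ strongly in $L^2(\Omega)$ and a.e.\ in $\Omega$, with $\|u\|_{L^2(\Omega)}=1$.

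Next I would exploit the Dirichlet condition on $D$. Since $u_n=0$ a.e.\ on $D$ and $\Omega\times D\subset Q_\Omega$,
$$\iint_{\Omega\times D}\frac{u_n(x)^2}{|x-y|^{N+2s}}\,dy\,dx\;=\;\iint_{\Omega\times D}\frac{|u_n(x)-u_n(y)|^2}{|x-y|^{N+2s}}\,dy\,dx\;\leq\;[u_n]_s^2\longrightarrow 0.$$
Setting $K(x):=\int_D|x-y|^{-N-2s}\,dy$, which is strictly positive for every $x\in\Omega$ because $|D|>0$, Fatou's lemma along the extracted subsequence yields
$$\int_\Omega u(x)^2\,K(x)\,dx\;\leq\;0,$$
whence $u\equiv 0$ a.e.\ in $\Omega$, contradicting $\|u\|_{L^2(\Omega)}=1$.

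The step I expect to require the most care is the compactness step: functions of $H^s_D(\Omega)$ are defined on all of $\R^N$, so one must explicitly isolate the $\Omega\times\Omega$ piece of the Gagliardo seminorm (which coincides with the intrinsic $H^s(\Omega)$ seminorm, enabling the classical compact embedding for Lipschitz domains) from the rest of the integral over $Q_\Omega$. Once this bookkeeping is settled, the role of $D$ is transparent: its positive Lebesgue measure makes the kernel $K(x)$ strictly positive on $\Omega$, and this is precisely what forces any nontrivial $L^2(\Omega)$ limit to vanish and closes the contradiction.
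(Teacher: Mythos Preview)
The paper does not actually give its own proof of this proposition; it simply states the result and refers to Proposition~2.4 in \cite{BM}. Your contradiction/compactness argument is correct and is in fact the standard route for Poincar\'e-type inequalities in this setting. The two key steps---isolating the $\Omega\times\Omega$ part of $Q_\Omega$ to obtain boundedness in $H^s(\Omega)$ and hence compactness in $L^2(\Omega)$, and then isolating the $\Omega\times D$ part together with $u_n|_D=0$ and the strict positivity of $K(x)=\int_D|x-y|^{-N-2s}\,dy$ on $\Omega$ to force the limit to vanish via Fatou---are both sound. Your proof therefore supplies a complete argument for what the paper outsources to a reference.
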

As a consequence of this result the coercivity of the operator in $H_{D}^{s}(\Omega)$ holds and Lax-Milgram Theorem can be applied to guarantee the existence and uniqueness of a weak solution to \eqref{probl} when $f\in L^{2}(\Omega)$.

To end this section, we state two summability results that we will need in the sequel.
\begin{Lemma}\label{punctual}
Fix $\alpha>0$. Given $\Omega\subset\mathbb{R}^{N}$ and $x \in \Omega^c$, we have
$$
I_{\Omega}^{\alpha} (x) =\dint_{\Omega} \dfrac{dy}{|x-y|^{N+\alpha}}\leq C \dfrac{1}{\mbox{dist} (x,\partial\Omega)^{\alpha}},
$$
where $C$ is a positive constant depending only on $\alpha, \Omega$ and $N$.
\end{Lemma}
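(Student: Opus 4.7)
The plan is to reduce the integral over $\Omega$ to an integral over the complement of a ball around $x$, where polar coordinates give an explicit evaluation.

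Set $d:=\mathop{\rm dist}(x,\partial\Omega)$. If $d=0$ the right-hand side is infinite and nothing needs to be proved, so I assume $d>0$. The first step is to show that $|x-y|\ge d$ for every $y\in\Omega$. This is the only geometric input. Since $x\in\Omega^c$ and $\Omega$ is open, if some $y\in\Omega$ satisfied $|x-y|<d$ then the segment from $x$ to $y$ would go from $\Omega^c$ into $\Omega$, so by continuity it would meet $\partial\Omega$ at some intermediate point $z$ with $|x-z|<d$, contradicting the definition of $d$. Hence $\Omega\subset\ren\setminus B_d(x)$.

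The second step is to bound the (positive) integrand on the larger domain $\ren\setminus B_d(x)$ and compute directly in polar coordinates centered at $x$:
$$
I_{\Omega}^{\alpha}(x)\;\le\;\int_{\ren\setminus B_d(x)}\frac{dy}{|x-y|^{N+\alpha}}\;=\;|\SN|\int_d^{\infty}r^{-1-\alpha}\,dr\;=\;\frac{|\SN|}{\alpha}\,d^{-\alpha}.
$$
The final radial integral converges precisely because $\alpha>0$, and one reads off the constant $C=|\SN|/\alpha$, which in fact depends only on $N$ and $\alpha$.

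There is no genuine obstacle here; the argument is almost entirely bookkeeping. The only point requiring a small amount of care is the connectedness observation that $|x-y|\ge d$ for every $y\in\Omega$, i.e.\ that $\mathop{\rm dist}(x,\Omega)\ge\mathop{\rm dist}(x,\partial\Omega)$ whenever $x\in\Omega^c$.
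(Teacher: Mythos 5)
Your proof is correct and follows essentially the same route as the paper: observe that $|x-y|\ge\mathop{\rm dist}(x,\partial\Omega)$ for $y\in\Omega$, enlarge the domain of integration to $\ren\setminus B_d(x)$, and evaluate in polar coordinates to get the constant $|\SN|/\alpha$. The only cosmetic difference is that you spell out the connectedness argument behind $\mathop{\rm dist}(x,\Omega)\ge\mathop{\rm dist}(x,\partial\Omega)$, which the paper simply states.
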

\begin{proof} Set $r:=\mbox{dist}(x,\partial\Omega)$ and define the ball $B:=B_{r}(x)$. For $y\in\Omega$, we have $|x-y|\geq r$ and thus $y\notin\dot{B}$ and
$$
I_{\Omega}^{\alpha}(x)\leq \dint_{\mathbb{R}^{N}\setminus B} \dfrac{dy}{|x-y|^{N+\alpha}} = w_{N-1}\dint_{r}^{\infty} \dfrac{dt}{t^{1+\alpha}}= \frac{w_{N-1}}{\alpha}\dfrac{1}{r^{\alpha}}.
$$
\end{proof}
\begin{remark}
The estimate above is sharp in the following sense:

\noindent Assume that for some $k>1$, there exists $\delta>0$, such that one has $|B_{kr}(x)\cap \Omega| \geq \delta |B_{kr}(x)|$. Then,
$$
I_{\Omega}^{\alpha}(x)\geq \dint_{B_{kr}(x)\cap\ \Omega}\dfrac{dy}{|x-y|^{N+\alpha}}\geq \dfrac{1}{(kr)^{N+\alpha}} |B_{kr}(x)\cap \Omega| \geq \delta |B_{kr}(x)|\geq \delta \dfrac{|B_{kr}(x)|}{(kr)^{N+\alpha}} = \dfrac{c}{r^{\alpha}}.
$$
\end{remark}

\begin{center}
\psscalebox{0.4 0.4}
%\psscalebox{1.0 1.0} % Change this value to rescale the drawing.
{
\begin{pspicture}(0,-7.311224)(15.532545,7.311224)
\pscircle[linecolor=black, linewidth=0.04, dimen=outer](10.132545,1.9112242){5.4}
\pscircle[linecolor=black, linewidth=0.04, dimen=outer](10.132545,2.1112242){1.6}
\psbezier[linecolor=black, linewidth=0.04](0.012545014,2.2712243)(3.1578448,4.8019495)(7.572545,2.1045575)(8.772545,1.1712242126464845)(9.972545,0.23789088)(12.476333,-3.5546448)(12.012545,-7.308776)
\psline[linecolor=black, linewidth=0.04](10.092545,2.1112242)(9.032545,0.9512242)(9.052545,0.9312242)
\psline[linecolor=black, linewidth=0.04](10.092545,2.0712242)(8.452545,-3.2087758)
\rput[bl](2.172545,1.4112242){\Huge $  \partial \Omega$}
\rput[bl](9.192545,1.9){\Huge $ r$}
\rput[bl](9.332545,-1.0287758){\Huge$ k r$}
\rput[bl](10.752545,2.6){\Huge$ B_r$}
\rput[bl](13.372545,0.011224213){\Huge $ B_{kr}$	}
\rput[bl](10.172545,2.2112243){\Huge $x$}
\end{pspicture}
}
\end{center}
\begin{remark}\label{chi}
Notice that if $0<\alpha<1$ we have that  $I_{\Omega}^{\alpha}(x)= - (-\Delta)^{\frac{\alpha}{2}} \chi_{\Omega}(x)=-\dint_{\mathbb{R}^{N}} \, \dfrac{\chi_{\Omega}(x)-\chi_{\Omega}(y)}{|x-y|^{N+\alpha}}dy$, $x\in\Omega^c$. This connects the integrability of this function in $\mathbb R^N\setminus \Omega$ with the norm of $\chi_\Omega$ in the Sobolev space $H^{\alpha/2} ({\mathbb R^N})$. To be more precise, we have

\begin{align*}
\int_{\mathbb R^N\setminus \Omega} I_{\Omega}^{\alpha}(x)dx &=
\int_{\mathbb R^N}  I_{\Omega}^{\alpha}(x)\chi_{ \Omega^c}(x) dx =
\iint_{\mathbb R^N \times \mathbb R^N} \frac{\chi_\Omega(y) \, \chi_{\Omega^c}(x)}{|x-y|^{N+\alpha}}dydx \\[2mm]
&= \frac 12 \iint_{\mathbb R^N \times \mathbb R^N} \frac{\left(\chi_\Omega(x) - \chi_{\Omega}(y)\right)^2}{|x-y|^{N+\alpha}}dxdy
=C_{N,\alpha} \|\chi_\Omega\|_{H^{\alpha/2  } (\R^N) }^2.
\end{align*}
\end{remark}

Next lemma deals with the local summability of a negative power of the distance to $\partial \Omega$. As we will see, the result  is true for domains not necessarily $C^{1,1}$.

\begin{Lemma} \label{regularidad}
Fix $0<\alpha<1$ and consider $\Omega\subset\mathbb{R}^{N}$ a  bounded domain where $\partial\Omega$ is of class $\mathcal{C}^{\beta}$, with $\beta>\alpha$. If $x \in \Omega^c$, then $\dfrac{1}{(\mbox{dist} (x,\partial\Omega))^{\alpha}}\in L^{1}_{loc}(\mathbb{R}^{N}\setminus\Omega)$.
\end{Lemma}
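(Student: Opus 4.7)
The plan is to reduce the statement to a local computation near $\partial\Omega$ using a graph parametrization of the boundary. Since $d(x):=\mbox{dist}(x,\partial\Omega)$ is bounded below by a positive constant on any compact subset of $\R^N\setminus\overline{\Omega}$, the integrand $d(x)^{-\alpha}$ is bounded there, and it suffices to prove integrability on $\Omega^c\cap V$ for a sufficiently small open neighborhood $V$ of $\partial\Omega$. By the $\mathcal{C}^\beta$ regularity hypothesis, I would cover $\partial\Omega$ by finitely many charts $U_1,\ldots,U_M$ in each of which, after a rigid motion, $\partial\Omega\cap U_i$ is the graph of a function $\varphi_i\in \mathcal{C}^\beta(\{|y'|<\rho\})$, while $\Omega^c\cap U_i$ lies on the side $\{y_N>\varphi_i(y')\}$; shrinking $V$ if necessary, I may assume that for $x\in \Omega^c\cap V\cap U_i$ the distance $d(x)$ is realized on $\partial\Omega\cap U_i$.

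Set $\gamma:=\min(\beta,1)$, so that each $\varphi_i$ is at least $\gamma$-Hölder continuous. For $x=(y',y_N)\in\Omega^c\cap V\cap U_i$ set $h:=y_N-\varphi_i(y')>0$. The key step is the one-sided distance estimate
$$
d(x)\ \ge\ c_i\, h^{1/\gamma},
$$
which I would prove as follows. For any boundary point $(z',\varphi_i(z'))\in \partial\Omega\cap U_i$, write $r:=|y'-z'|$ and use the Hölder bound
$$
|y_N-\varphi_i(z')|\ \ge\ h-\|\varphi_i\|_{\mathcal{C}^\gamma}\, r^{\gamma}.
$$
Split into two cases: if $\|\varphi_i\|_{\mathcal{C}^\gamma}\, r^\gamma\le h/2$ then the vertical separation is at least $h/2$; otherwise $r\ge c\, h^{1/\gamma}$. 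Either way $|x-(z',\varphi_i(z'))|\ge c\, h^{1/\gamma}$, and the claim follows by taking the infimum over $z'$.

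With this estimate in hand, Fubini and the change of variable $y_N\mapsto h$ would reduce each local integral to a constant multiple of $\int_0^{\rho} h^{-\alpha/\gamma}\,dh$, which is finite precisely when $\alpha/\gamma<1$. This condition is automatic in both allowed regimes: when $\beta\ge 1$ it reads $\alpha<1$ (given), and when $\beta<1$ it reads $\beta>\alpha$ (the hypothesis). Summing the finitely many local contributions together with the trivially bounded part at positive distance from $\partial\Omega$ would conclude the proof. The main obstacle is exactly the sharp distance estimate: the obvious upper bound $d(x)\le h$ degrades to $d(x)\asymp h^{1/\beta}$ when $\beta<1$, and the assumption $\beta>\alpha$ is precisely what compensates for this Hölder loss in the resulting one-variable integral; once this is understood, the rest is bookkeeping.
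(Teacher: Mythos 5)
Your proof is correct and follows essentially the same route as the paper: localize to a graph chart, prove a lower bound $d(x)\gtrsim h^{1/\gamma}$ (with $\gamma=\min(\beta,1)$) on the distance in terms of the vertical offset $h=y_N-\varphi(y')$ via the same two-case dichotomy on an arbitrary boundary point, and then integrate in the normal variable. The only cosmetic difference is that the paper treats the Lipschitz case (Claim 1) and the $\beta<1$ H\"older case (Claim 2) in two passes instead of unifying them through $\gamma=\min(\beta,1)$.
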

\begin{proof}
Without loss of generality, we can assume that $\partial\Omega$ is (locally) the graph of a {positive} function $\phi$ defined on a ball $B\subset\mathbb{R}^{N-1}$, satisfying $|\phi(x')-\phi(y')|\leq C_{\phi} |x'-y'|^{\beta}$.
%That is,
%$$
%\partial\Omega=\{(x', \phi(x')): x'\in B\}.
%$$
Then, it suffices to prove that if $T>0$ is large enough and $D$ denotes the shaped cylinder
$$
D:=\{ (x',x_n): x'\in B, \ \phi(x')\leq x_n\leq T\},
$$
\begin{center}
\psscalebox{0.3 0.3} % Change this value to rescale the drawing.
{
\begin{pspicture}(0,-5.817108)(14.358463,5.817108)
\psline[linecolor=black, linewidth=0.04](3.6484637,5.802966)(3.6484637,-2.197034)(3.6484637,-2.197034)(0.048463747,-5.797034)(0.048463747,-5.797034)
\psline[linecolor=black, linewidth=0.04](3.6484637,-2.197034)(6.448464,-2.197034)(6.448464,-2.197034)
\psellipse[linecolor=black, linewidth=0.04, dimen=outer](9.448463,-4.3970337)(3.0,1.4)
\psellipse[linecolor=black, linewidth=0.04, dimen=outer](9.448463,4.202966)(3.0,1.2)
\psline[linecolor=black, linewidth=0.04](6.448464,4.202966)(6.448464,4.202966)(6.448464,-4.597034)(6.448464,-4.597034)
\psline[linecolor=black, linewidth=0.04](12.448463,4.202966)(12.448463,-4.197034)(12.448463,-4.597034)(12.448463,-4.597034)
\psline[linecolor=black, linewidth=0.04, linestyle=dotted, dotsep=0.10583334cm](9.648464,-5.797034)(9.648464,-0.19703384)(9.648464,-0.19703384)
\psbezier[linecolor=black, linewidth=0.04](9.648464,-0.19703384)(9.248464,0.6029661)(7.6484637,2.2029662)(6.448464,1.0029661560058594)
\psbezier[linecolor=black, linewidth=0.04](12.448463,1.4029661)(12.248464,2.2029662)(9.848464,1.0029662)(9.648464,-0.19703384399414062)
\psbezier[linecolor=black, linewidth=0.04](9.248464,2.602966)(8.448463,3.0029662)(6.6484637,2.2029662)(6.448464,1.0029661560058594)
\psbezier[linecolor=black, linewidth=0.04](9.248464,2.602966)(9.648464,3.0029662)(11.648464,3.0029662)(12.448463,1.4029661560058593)
\psline[linecolor=black, linewidth=0.04](12.448463,-2.197034)(14.048464,-2.197034)(14.048464,-2.197034)
\rput[bl](4.8484635,1.4029661){\Huge $\partial \Omega$}
\rput[bl](13.248464,4.202966){\Huge $x_n=T$}
\psline[linecolor=black, linewidth=0.04, linestyle=dashed, dash=0.17638889cm 0.10583334cm](6.448464,-2.197034)(12.448463,-2.197034)(12.448463,-2.197034)
\psline[linecolor=black, linewidth=0.04, linestyle=dashed, dash=0.17638889cm 0.10583334cm](9.248464,0.6029661)(9.248464,2.602966)(9.248464,2.602966)
\psline[linecolor=black, linewidth=0.04, linestyle=dotted, dotsep=0.10583334cm](9.248464,0.20296615)(9.248464,-2.9970338)(9.248464,-2.9970338)
\psline[linecolor=black, linewidth=0.04](3.6484637,5.802966)(3.4484637,5.6029663)(3.4484637,5.6029663)
\psline[linecolor=black, linewidth=0.04](3.8484638,5.6029663)(3.6484637,5.802966)
\psline[linecolor=black, linewidth=0.04](0.048463747,-5.797034)(0.048463747,-5.597034)
\psline[linecolor=black, linewidth=0.04](0.048463747,-5.797034)(0.24846375,-5.797034)
\psline[linecolor=black, linewidth=0.04](14.048464,-2.197034)(13.848464,-1.9970338)
\psline[linecolor=black, linewidth=0.04](14.048464,-2.197034)(13.848464,-2.397034)
\end{pspicture}
}
\end{center}
\noindent we have $$\dint_{D} \dfrac{dx}{\mbox{dist}(x,\partial\Omega)^{\alpha}}<\infty.$$

\vskip 2mm

\noindent Let us start with a Lipschitz function $\phi$. To that end we make first the following

\vskip 2mm

\noindent {{\bf Claim 1:}} There exists $c_0>0$ such that  for all $x=(x',x_n)\in D$ one has
\begin{equation*}
\mbox{dist} (x,\partial\Omega)\geq c_0|x_n-\phi(x')|.
\end{equation*}

\vskip 2mm

\noindent Assuming that the claim is true, using that $0<\alpha<1$ we  have
\begin{equation*}\begin{split}
\dint_{D} \dfrac{1}{\mbox{dist} (x,\partial\Omega)^{\alpha}}\,dx&\leq \frac{1}{c_0^\alpha} \dint_{B} \Big( \dint_{\phi(x')}^{T} \dfrac{dx_n}{|x_n-\phi(x')|^{\alpha}}\Big)\, dx'\\
&= \frac{1}{c_0^\alpha} \dint_{B} \dfrac{1}{1-\alpha} \big( T-\phi(x')\big)^{1-\alpha} \leq C_{\alpha} T^{1-\alpha} |B|,
\end{split}\end{equation*}
and the lemma follows.

\vskip 2mm

\textit {Proof of   Claim 1:}
Let $x=(x', x_n)\in D$ and define $\bar{x}:=(x',\phi(x'))$. Take now $\bar{y}=(y',\phi(y'))$ with $y'\in B$.
We want to prove that  $ \exists c_0 >0$ such that $|x-\bar{y}| \geq c_0|x_n-\phi(x')|$, $\forall \overline{y}$.  \,Let us consider two cases:

\textit{Case 1}: $|x_n-\phi(y')|\ge \frac 12 |x_n-\phi(x')|$. In this case, since $|x-\bar{y}|\ge  |x_n-\phi(y')|$, the result is obvious.

\textit{Case 2}: $|x_n-\phi(y')|\le \frac 12 |x_n-\phi(x')|$. Since $\phi$ is Lipschitz, there exists a constant $C_\phi$ such that
$|\phi(x')-\phi(y')|\leq C_{\phi} |x'-y'|$. Hence,
$$
|x_n-\phi(x')|\le |x_n-\phi(y')|+|\phi(x')-\phi(y')|\le \frac 12 |x_n-\phi(x')|+C_{\phi} |x'-y'|.
$$
It means that $|x_n-\phi(x')|\le 2C_{\phi} |x'-y'|$, and since $|x-\bar{y}|\ge  |x'-y'|$, the result follows with $c_0=\frac 1{2C_\phi}$.

\vskip 2mm
\begin{center}
\psscalebox{0.3 0.3} % Change this value to rescale the drawing.
{
\begin{pspicture}(0,-4.4)(15.238736,4.4)
\psbezier[linecolor=black, linewidth=0.04](2.418736,2.0)(4.418736,4.4)(15.218736,1.6)(15.218736,0.0)
\psbezier[linecolor=black, linewidth=0.04](0.018735962,-2.4)(2.018736,0.0)(12.818736,-2.8)(12.818736,-4.4)
\psbezier[linecolor=black, linewidth=0.04](2.418736,2.0)(1.4987359,1.96)(0.63873595,-0.74)(0.018735962,-2.4)
\psbezier[linecolor=black, linewidth=0.04](15.218736,0.0)(13.698736,0.24)(12.818736,-2.8)(12.818736,-4.4)
\psellipse[linecolor=black, linewidth=0.04, dimen=outer](8.018736,3.8)(2.4,0.6)
\psline[linecolor=black, linewidth=0.04](5.638736,3.76)(7.958736,-0.26)(10.4187355,3.74)
\rput[bl](8.4187355,-0.38){\Huge  $\overline{x}$}
\psline[linecolor=black, linewidth=0.04, linestyle=dotted, dotsep=0.10583334cm](7.978736,-0.24)(7.998736,3.76)(7.998736,3.76)
\rput[bl](8.358736,3.7){\Huge $x$}
\psline[linecolor=black, linewidth=0.05, linestyle=dotted, dotsep=0.10583334cm](9.398736,1.96)(9.398736,-0.42)(9.398736,-0.38647887)
\rput[bl](9.558736,1.88){\Huge $\Phi (y')$}
\rput[bl](9.558736,-0.64){\Huge $\overline{y}$}
\end{pspicture}
}
\end{center}

\vskip 2mm

Continuing with the proof of the lemma, for a function $\phi\in\mathcal {C}^{\beta}$, with $\beta>\alpha$, we obtain that if $\bar{y}=(y',\phi(y'))$, then $\phi(y')$ is below
$$
\phi(x')+C_{\phi} |x'-y'|^{\beta}.
$$
This is better reflected in the following claim, whose proof is similar to the previous one:

\vskip 2mm

\noindent {{\bf Claim 2}}: There exists $c_0>0$ such that  for all $x=(x',x_n)\in D$ one has
\begin{equation*}
\mbox{dist} (x,\partial\Omega)\geq c_0\min\left(|\phi(x')-x_n|, \, |\phi(x')-x_n|^{\frac 1\beta}\right).
\end{equation*}

\begin{center}
\psscalebox{0.4 0.4} % Change this value to rescale the drawing.
{
\begin{pspicture}(0,-4.2028)(12.720013,4.2028)
\psbezier[linecolor=black, linewidth=0.04](0.26081267,-2.1372004)(2.3608127,-0.6372003)(10.000813,-1.8572003)(10.680813,-4.197200317382812)
\psbezier[linecolor=black, linewidth=0.04](2.2808127,2.2427998)(4.3808126,3.7427998)(12.020813,2.5227997)(12.700812,0.1827996826171875)
\psbezier[linecolor=black, linewidth=0.04](0.28081268,-2.1772003)(0.020812683,0.4027997)(1.0808127,1.8027997)(2.2408128,2.2227996826171874)
\psbezier[linecolor=black, linewidth=0.04](10.680813,-4.1772003)(10.420813,-1.5972003)(11.480813,-0.19720031)(12.640813,0.2227996826171875)
\psellipse[linecolor=black, linewidth=0.04, dimen=outer](6.280813,3.6027997)(2.4,0.6)
\psline[linecolor=black, linewidth=0.04, linestyle=dotted, dotsep=0.10583334cm](6.2808127,3.6027997)(6.2808127,-0.19720031)
\psbezier[linecolor=black, linewidth=0.04](6.2808127,-0.19720031)(6.2808127,2.6027997)(8.680813,2.8027997)(8.680813,3.6027996826171873)
\psbezier[linecolor=black, linewidth=0.04](3.8808126,3.6027997)(4.0808125,2.8027997)(6.0808125,2.2027998)(6.2808127,-0.1972003173828125)
\rput[bl](6.8808126,-0.19720031){\Huge $\overline{x}$}
\rput[bl](6.4808125,3.6027997){\Huge $x$}
\rput[bl](1.2808127,-0.19720031){\Huge $ \partial \Omega$}
\end{pspicture}
}
\end{center}

\noindent Now, since $\beta>\alpha$,
\begin{equation*}\begin{split}
\dint_{D} \dfrac{1}{\mbox{dist} (x,\partial\Omega)^{\alpha}}\,dx&\leq \frac{1}{c_0} \dint_{B}  \dint_{\phi(x')}^{T} \left( \dfrac{1}{|x_n-\phi(x')|^\alpha}+\dfrac{1}{|x_n-\phi(x')|^{\frac{\alpha}{\beta}}}\right)\,dx_n dx' \\
&\leq C_{\alpha,\beta, T}|B| <+\infty,
\end{split}\end{equation*}
and the lemma follows in this case, too.
\end{proof}

\begin{remark}
As one can readily see, we actually have the $L^p$ local summability  $(\mbox{dist} (x,\partial\Omega))^{-\alpha}\in L^{p}_{\rm loc}(\mathbb{R}^{N}\setminus\Omega)$,  $\forall p\in [1, \frac{\beta}\alpha)$.
\end{remark}

\medskip

\begin{remark}\label{geometric}
There is a simple geometric proof of  Claim 1 that we want to outline: since $\phi$ is Lipschitz, given $x',y'\in B$, we have that $\phi(y')$ is \lq\lq below"\,  the cone $\phi(x')+C_{\phi} |x'-y'|$. To be more precise, for every $\bar{x}=(x',\phi(x'))$ there exists a cone with vertex at $\bar x$ of exclusion for the other points in $\partial \Omega$ and  determined by
$$
\Gamma(\bar{x}):=\{z=(z',z_n): z_n>\phi(x')+C_{\phi} |x'-z'|\}.
$$
That is, for $\bar{y}=(y',\phi(y'))$, $y'\neq x'$, we have that $\bar y\notin L(\bar x)$ and hence
$$
\mbox{dist} (x,\partial\Omega)\geq \mbox{dist} (x,\partial \Gamma(\bar x)) \sim |x_n-\phi(x')|.
$$
\end{remark}

\medskip

\subsection{A digression on local integrability of the fractional Laplacian}

One may find some similarities between the above estimates and the theory of regularity for nonlocal minimal surfaces developed by Caffarelli-Roquejoffre-Savin \cite{CRS} (see also  the work of Barrios-Figalli-Valdinoci \cite{BFV}). In this theory one fixes the difussion $s\in (0,1)$ determined by the Euler-Lagrange equation for a set $E$
$$
\int_{\mathbb R^N} \frac{\chi_E(x)-\chi_E(y)}{|x-y|^{N+s}}dy=0, \quad x\in \partial E,
$$
and then finds, accordingly, the regularity of $\partial E$. This is done by looking for minimizers that live in the Sobolev space $H^{s/2}$. Our approach here goes somehow in the opposite direction. We consider regularity on the boundary and then determine the Sobolev space to which the characteristic function of the set belongs. Thus, from Lemmas  \ref{punctual}, \ref{regularidad} and Remark \ref{chi} one can roughly conclude that if $\partial \Omega$ has  smoothness $\beta>\alpha$ then $\chi_\Omega$ is in $H^{\alpha/2}$. We want to give a more general result along these lines.  To that end we make the following definition.

\begin{Definition}
We say that the function $\omega_0: [0,\infty) \to [0,\infty) $ is a modulus of continuity if it is continuous, increasing and $\omega_0(0)=0$.
\end{Definition}
The next result shows the size of the operator that we can take in order to ensure local integrability.
\begin{Proposition}\label{general}
Consider, for $\Psi$ positive and increasing, the integro-differential operator of order $\Psi$ defined on a set $E$ (its characteristic function, rather) by
$$
I_E(x)=\int_{\mathbb R^N} \frac{\chi_E(x)-\chi_E(y)}{|x-y|^{N}}\Psi\left(\frac 1{|x-y|}\right)dy,
$$
and let $w_0$ be a modulus of continuity as defined before. Assume that $\Omega$ is a domain whose boundary coincides locally with the graphs of functions $\Phi$ with the property
$|\Phi(x)-\Phi(y)| \le \omega_0(|x-y|)$. Then, if $\Psi$ and $\omega_0$ satisfy the condition
\begin{equation}\label{Dini}
\int_0^1\frac{\omega_0(t)}{t}\Psi\left(\frac 1t\right)dt<\infty,
\end{equation}
we have that $I_{\Omega}(x)$ is locally integrable on $\Omega^c$.
\end{Proposition}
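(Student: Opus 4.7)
The plan is to mimic the argument of Lemma~\ref{regularidad}, with the singular power $|x-y|^{-N-\alpha}$ replaced by the more general kernel $|x-y|^{-N}\Psi(1/|x-y|)$. By compactness of $\partial\Omega$ and a standard covering, it is enough to prove $\int_D |I_\Omega(x)|\,dx<\infty$ on a single cylinder $D=\{(x',x_n):x'\in B,\,\phi(x')<x_n<T\}$ sitting over a local graph piece of $\partial\Omega$, since on any set at positive distance from $\partial\Omega$ the operator $I_\Omega$ is bounded by monotonicity of $\Psi$ together with boundedness of $\Omega$.

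First I would establish the pointwise estimate
$$|I_\Omega(x)|\le\int_\Omega\frac{\Psi(1/|x-y|)}{|x-y|^N}\,dy\le C_N\int_{r(x)}^M\frac{\Psi(1/t)}{t}\,dt=:G(r(x)),$$
where $r(x):=\mathrm{dist}(x,\partial\Omega)$, by passing to polar coordinates around $x$ and using that for $x\in D$ one has $\Omega\subset B(x,M)\setminus B(x,r(x))$ for a uniform $M$. Next, extending Claim~2 of Lemma~\ref{regularidad} to a general modulus, the same two-case dichotomy fed with $|\phi(x')-\phi(y')|\le\omega_0(|x'-y'|)$ yields
$$r(x)\ge c_0\min\bigl(h,\,\omega_0^{-1}(h/2)\bigr),\qquad h:=x_n-\phi(x'),$$
with $\omega_0^{-1}$ the (continuous, increasing) inverse of $\omega_0$ on its range; since $\omega_0^{-1}(h)\le h$ near $0$, this reduces to $r(x)\ge c_0\,\omega_0^{-1}(h/2)$ for small $h$.

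The last step is a Fubini/layer-cake argument. The geometric estimate yields the super-level bound
$$\bigl|\{x\in D:r(x)<t\}\bigr|\le C|B|\,\omega_0(ct)\quad\text{for small }t,$$
and $\le|D|$ for all $t$. Hence
$$\int_D G(r(x))\,dx=\int_0^M\frac{\Psi(1/t)}{t}\,\bigl|\{x\in D:r(x)<t\}\bigr|\,dt\le C\int_0^1\frac{\omega_0(t)\Psi(1/t)}{t}\,dt+C'\int_1^M\frac{\Psi(1/t)}{t}\,dt,$$
in which the first integral is finite by the Dini-type hypothesis \eqref{Dini} and the second is controlled by the monotonicity of $\Psi$ (so $\Psi(1/t)\le\Psi(1)$ for $t\ge1$).

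I expect the main delicate point to be the generalization of the geometric estimate to an arbitrary modulus of continuity $\omega_0$: one needs continuity and monotonicity of $\omega_0^{-1}$ on the relevant range and a careful verification that the dichotomy in the proof of Claim~2 carries through without the Lipschitz or H\"older homogeneity that was used there. Once this is settled, the pointwise bound and the layer-cake computation are routine, and \eqref{Dini} appears naturally as precisely the integral that the Fubini argument produces.
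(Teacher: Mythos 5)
Your proposal is correct and follows essentially the route the paper indicates: localize to a boundary cylinder, bound $|I_\Omega(x)|$ by $C_N\int_{r(x)}^{M} t^{-1}\Psi(1/t)\,dt$ via polar coordinates as in Lemma~\ref{punctual}, lower-bound $r(x)=\mathrm{dist}(x,\partial\Omega)$ through the two-case dichotomy of Claim~2 (equivalently, the exclusion cone/cusp picture of Remark~\ref{geometric}), and then integrate. The paper omits the proof, saying only that it ``follows the same ideas considered before''; your Fubini/layer-cake step is precisely the computation that makes condition \eqref{Dini} appear, so the two arguments coincide in substance.
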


The proof follows the same ideas considered before, particularly those in the geometric proof given in Remark \ref{geometric}, and will be omitted. For the case considered initially, that is when $\omega_0(t)=t^\beta$ and $\Psi(t)=t^\alpha$, we have  $\displaystyle \int_0^1\frac{\omega_0(t)}{t}\Psi\left(\frac 1t\right)dt =\int_0^1\frac{t^{\beta-\alpha}}{t}$dt.  Clearly we need $\alpha<\beta$ for \ref{Dini} to hold. Observe that for a domain resembling the Lebesgue spine, that is, one for which the modulus of continuity $w_0$ satisfies  $w_0^{-1}(r)=e^{-1/r}$ (or, directly,  $\displaystyle w_0(t)=\frac 1{\log \frac 1t} $) there is no $\Psi$ for which condition \ref{Dini} holds.

\

%%%%%%%%%%%%%%%%%%%%%%%%%%%%
%%%%%%%%%%%%%%%%%%%%%%%%%%%%
%%%%%%%%%%%%%%%%%%%%%%%%%%%%

\section{The mixed eigenvalue problem: properties and examples}\label{2}
\setcounter{equation}{0}

Let us consider the sequence of mixed eigenvalue problems
\begin{equation}\label{sequence}
\left\{\begin{array}{rcll}
(-\Delta)^{s} u_1^{k}&=&\lambda_{1}^{k} u_1^{k}&\inn \Omega,\\[1mm]
u_1^{k}&=&0&\inn D_k,\\[1mm]
\mathcal{N}_{s} u_1^k&=&0&\inn N_k,
\end{array}\right.
\end{equation}
where $D_k,\, N_k\subset \Omega^c$ satisfy \eqref{nkdk}
and  $\lambda_1^k = \lambda_1 (D_k) $ and $u_1^k$ represent  the first eigenvalue and the first (normalized in $L^2(\Omega)$)  eigenfunction,  respectively.

\

First, we prove an existence result for the solution of \eqref{sequence}.

\begin{Proposition} \label{u1k}
Given $\Omega$ admissible and  pairs of sets $D_k$ and $N_k$ such that \eqref{nkdk} holds true,
there exists a function $u_1^k \in H_D^s (\Omega)$ satisfying \eqref{sequence}.

Moreover:
\begin{itemize}
\item[i)] $\lambda_1 (\emptyset) = 0 \leq \lambda_1 (D_k)\leq \lambda_1 (\Omega^c)$;
\item[ii)] $u_1^k\geq 0$ in $\mathbb{R}^N$, and  $u_1^k> 0$ in $\Omega\cup N_k $;

\item[iii)] The sequence $\{u_{1}^{k}\}_{k\in\N}$ is uniformly bounded in $L^\infty(\Omega)$;
\item[iv)] $\exists c>0$ such that $\dfrac{1}{|\Omega|} \dint_{\Omega} u_{1}^{k}(y)\,dy\geq c$, uniformly with respect to  $k$.
\end{itemize}
\end{Proposition}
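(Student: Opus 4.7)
The plan is to apply the direct method of the calculus of variations. By Proposition \ref{Poin}, the quadratic form
\[
Q(u):=\iint_{Q_\Omega}\frac{|u(x)-u(y)|^2}{|x-y|^{N+2s}}\,dx\,dy
\]
is coercive on $H^s_{D_k}(\Omega)$ whenever $|D_k|>0$. I would take a minimizing sequence $\{v_n\}$ for
\[
\lambda_1^k=\inf\{Q(u)\colon u\in H^s_{D_k}(\Omega),\ \|u\|_{L^2(\Omega)}=1\},
\]
note that it is bounded in $H^s_{D_k}(\Omega)$, and extract a subsequence with $v_n\rightharpoonup u_1^k$ weakly in $H^s_{D_k}(\Omega)$ and $v_n\to u_1^k$ strongly in $L^2(\Omega)$ via the compact fractional Sobolev embedding on the bounded Lipschitz domain $\Omega$. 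Weak lower semicontinuity of $Q$ forces $u_1^k$ to attain the infimum, and a standard Lagrange multiplier argument produces the weak formulation of \eqref{sequence}, with the Neumann condition on $N_k$ encoded via \eqref{parts}.

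For (i), the inclusions $H^s_{\Omega^c}(\Omega)\subset H^s_{D_k}(\Omega)\subset H^s_{\emptyset}(\Omega)$ together with the variational definition yield $\lambda_1(\emptyset)\le\lambda_1(D_k)\le\lambda_1(\Omega^c)$; the constant function $u\equiv|\Omega|^{-1/2}$ is admissible when $D=\emptyset$ and satisfies $Q(u)=0$, so $\lambda_1(\emptyset)=0$. For (ii), replacing $u_1^k$ with $|u_1^k|$ preserves the $L^2$-norm and does not increase $Q$, so I may assume $u_1^k\geq 0$; the strong maximum principle for $(-\Delta)^s$ applied to the nontrivial nonnegative weak solution of $(-\Delta)^s u_1^k=\lambda_1^k u_1^k$ in $\Omega$ then yields $u_1^k>0$ in $\Omega$. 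For $x\in N_k$, the pointwise Neumann relation $\mathcal{N}_s u_1^k(x)=0$ rearranges into
\[
u_1^k(x)\int_\Omega\frac{dy}{|x-y|^{N+2s}}=\int_\Omega\frac{u_1^k(y)}{|x-y|^{N+2s}}\,dy,
\]
and the right-hand side is strictly positive, hence $u_1^k(x)>0$.

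For (iii), I would run a Moser--Stampacchia iteration on the eigenvalue equation: truncations $\varphi=(u_1^k-M)_+^{\beta}$ belong to $H^s_{D_k}(\Omega)$ because they vanish on $D_k$, and the standard nonlocal inequality
\[
(a-b)\bigl((a-M)_+^{\beta}-(b-M)_+^{\beta}\bigr)\geq c_\beta\bigl((a-M)_+^{(\beta+1)/2}-(b-M)_+^{(\beta+1)/2}\bigr)^2
\]
combined with the fractional Sobolev embedding and the uniform bound $\lambda_1^k\leq\lambda_1(\Omega^c)$ from (i) delivers $\|u_1^k\|_{L^\infty(\Omega)}\leq M_0$, with $M_0$ depending only on $\Omega$, $N$, $s$ and $\lambda_1(\Omega^c)$. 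Property (iv) is then immediate: from $1=\int_\Omega (u_1^k)^2\leq M_0\int_\Omega u_1^k$ I get $\frac{1}{|\Omega|}\int_\Omega u_1^k\geq (M_0|\Omega|)^{-1}=:c$. The main obstacle will be verifying that the Moser iteration produces a constant $M_0$ genuinely uniform in $k$: all embedding and Poincar\'e constants must depend only on $\Omega$ and not on the specific splitting $\mathbb R^N=\Omega\cup D_k\cup N_k$, and admissibility of the truncated test functions must be checked throughout, which is delicate in the regime where $D_k$ or $N_k$ may have infinite measure.
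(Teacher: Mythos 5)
Your proposal follows essentially the same route as the paper: direct method for existence, the inclusion chain $H^s_0(\Omega)\subset H^s_{D_k}(\Omega)\subset H^s(\Omega)$ for (i), the $|u^k_1|$ substitution together with the strong maximum principle and the pointwise rearrangement of $\mathcal N_s u_1^k=0$ for (ii), and a Moser-type iteration for (iii). The one place where you genuinely deviate is inside (iii): you test the weak formulation directly with truncated powers $(u_1^k-M)_+^\beta$ and invoke the discrete inequality $(a-b)\bigl((a-M)_+^{\beta}-(b-M)_+^{\beta}\bigr)\geq c_\beta\bigl((a-M)_+^{(\beta+1)/2}-(b-M)_+^{(\beta+1)/2}\bigr)^2$, whereas the paper instead applies the convexity inequality $(-\Delta)^s\Phi(u)\le\Phi'(u)(-\Delta)^s u$ pointwise (from \cite{LPPS}), multiplies by $\Phi(u_1^k)$, integrates by parts via \eqref{parts}, and then verifies by hand that the boundary contribution over $N_k$ has a sign. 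Your version is somewhat cleaner since the Neumann term disappears automatically once the test function lies in $H^s_{D_k}(\Omega)$; the paper's version avoids the need to justify that $(u_1^k-M)_+^\beta\in H^s_{D_k}(\Omega)$ for $\beta>1$, a point you would still need to handle by truncating the power at some level $T$, exactly as the paper's $\Phi_T$ does. Finally, your derivation of (iv) from $1=\int_\Omega (u_1^k)^2\le M_0\int_\Omega u_1^k$ is more explicit than the paper's terse remark that it ``follows by contradiction'' from $u_1^k\geq 0$ and $\|u_1^k\|_{L^2(\Omega)}=1$; those two facts alone do not suffice, and your use of the uniform $L^\infty$ bound from (iii) is precisely what closes the gap.
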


\proof
The existence of the pair $( \lambda_k, u_1^k ) \in \R^+ \times H^s_{D_k} (\Omega) $ follows from \cite{BM}, Proposition 2.4.

Moreover i) is a consequence of the fact that
\begin{equation}\label{R}
\lambda_1^k=\inf_{\mathcal{H}_k} \iint_{Q_\Omega} \dfrac{|u(x)-u(y)|^2}{|x-y|^{N+2s}}\,dxdy,
\end{equation}
where $\mathcal{H}_k := \{ u\in H_{D_k}^{s}(\Omega), \|u\|_{L^2(\Omega)}=1\}$.

We first observe that by Proposition \ref{Poin}  and since
\begin{equation}\label{inclus}
H^s_0 (\Omega) \subset {H^s_D (\Omega)} \subset H^s (\Omega)\,,
\end{equation}
we deduce that  i) holds true.

\vskip 0.5 cm

ii) It immediately follows from the attainability of the first eigenvalue.  For every $k$, there exists $u_1^k\in H_{D_k}^s(\Omega)$, minimum of the Rayleigh quotient in \eqref{R}. But then also $|u_1^k|$ minimizes the quotient, so we can assume $u_1^k\geq 0$.

The fact that $u_1^k>0$ in $\Omega$ follows now as a consequence of the Strong Maximum Principle (see Theorem 1.1 in \cite{BM}).
Moreover, recalling the Neumann boundary condition, we have that
$$
\forall x\in N_k \qquad u (x) \int_{ \Omega}   \dfrac{dy }{|x-y|^{N+2s}}\, dy = \int_{ \Omega}  \dfrac{u(y) \ \ dy }{|x-y|^{N+2s}}\, dy  >0
$$
since $u(y)>0$ in $\Omega$.

\vskip 0.5 cm

iii) We use an argument of Moser type: we set the following convex function
$$
\Phi(\sigma)=\Phi_T(\sigma):=\left\{
\begin{array}{ll}
 \sigma^\beta,  &{\rm if} \quad 0\leq \sigma<T,\\[2mm]
\beta T^{\beta-1} \sigma-(\beta-1)T^\beta, \quad&{\rm if} \quad \sigma \ge T,
\end{array}\right.
$$
for $\beta > 1$ and $T>0$ large. Since
$\Phi$ is Lipschitz (with constant $L_{\Phi}:=\beta T^{\beta-1} $) and $\Phi(0)=0$, then $\Phi(u_1^k) \in H_{D_k}^s(\Omega)$ and, using Proposition $4$ in \cite{LPPS}, we have
\begin{equation}\label{des}
(-\Delta)^{s} \Phi(u_1^k)\le \Phi'(u_{1}^{k}) (-\Delta)^s u_{1}^{k} \qquad \mbox{ in } \Omega.
\end{equation}
Since $\Phi $ is positive, multiplying both sides of \eqref{des} by $\Phi (u_{1}^k)$, and integrating over $\Omega$, we get
$$
\dint_{\Omega } (-\Delta)^s \Phi(u_1^k)(x)\, \Phi (u_1^k)(x)\,dx\leq \lambda_{1}^k \dint_{\Omega } \Phi'(u_1^k)(x)\,\Phi(u_1^k) (x) \,u_1^k (x)\,dx \  .
$$
On the other hand, using the integration by parts formula  \eqref{parts},
$$
\begin{array}{c}
\dint_{\Omega } (-\Delta)^s \Phi(u_1^k) (x)\, \Phi (u_1^k) (x) \,dx \\
\displaystyle = \dfrac{a_{N,s}}{2} \iint_{Q_\Omega} \dfrac{\big(\Phi(u_1^k) (x)-\Phi(u_1^k)(y)\big)^2}{|x-y|^{N+2s}}dxdy - \dint_{\Omega^c } \mathcal{N}_s [\Phi(u_1^k)(x)]\, \Phi (u_1^k) (x)\,dx
\end{array}
$$
and  using the convexity of  $\Phi$, and since $u_1^k (x) \equiv 0$ en $D_k$, then
$$
\begin{array}{c}
\dint_{N_k } \mathcal{N}_s [\Phi(u_1^k)(x)]\, \Phi (u_1^k) (x)\,dx
\leq
- \dint_{N_k } \mathcal{N}_s [u(x)]    \Phi'(u_1^k)(x)\, \Phi (u_1^k) (x)\,dx \leq 0 \,.
\end{array}
$$
Thus, since $\lambda_1^k \leq \lambda_1$ we have
$$
\frac{\mathcal{S}}{2} \|\Phi(u_1^{k})\|^2_{L^{2_s^{*}}(\Omega)}
\leq   \lambda_1 \dint_{\Omega} \Phi'(u_1^k)\,\Phi(u_1^k)\,u_1^k\,dx.
$$
The rest of the proof follows as in   Theorem 13 of \cite{LPPS}.

\vskip 0.5 cm

iv) follows by contradiction, from the fact that $u_1^k\geq 0$ and $\|u_1^k\|_{L^2(\Omega)}=1$, for every $k\in \N$.
\qed

\medskip

Next we study the behavior of the sequence $u_k$ as $k$ diverges.

\medskip

\begin{Proposition}\label{weakly}
Let $\Omega$ be admissible and $N_k$ and $D_k$ as in \eqref{nkdk} and consider the solutions $u^k_1$ of problem \eqref{sequence}. Then there exists a measurable function $u^*$ in $\Omega$,  such that, up to subsequences (not relabeled)
\begin{equation}\label{conv}
u^k_1 \longrightarrow u^*
\qquad
\begin{array}{l}
\mbox{Weakly in }  \ H^s (\Omega), \\
\mbox{Strongly in } \ L^p (\Omega), \ \ \forall p \in[1,2^*_s), \\
\mbox{a.e in }  \ \Omega \,. \\
\end{array}
\end{equation}
\end{Proposition}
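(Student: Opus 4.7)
The plan is to establish a uniform bound on $\{u_1^k\}$ in the standard fractional Sobolev space $H^s(\Omega)$ and then invoke the Rellich--Kondrachov type compact embedding for bounded Lipschitz domains.

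First, I would exploit the Rayleigh quotient characterization together with the normalization $\|u_1^k\|_{L^2(\Omega)}=1$, which yields
$$
\iint_{Q_\Omega} \frac{|u_1^k(x)-u_1^k(y)|^2}{|x-y|^{N+2s}}\,dxdy \;=\; \lambda_1^k.
$$
By item (i) of Proposition \ref{u1k}, $\lambda_1^k \leq \lambda_1(\Omega^c)$, so the left-hand side is bounded uniformly in $k$. Since $\Omega\times\Omega \subset Q_\Omega$, restricting the domain of integration gives the uniform bound
$$
[u_1^k]_{H^s(\Omega)}^2 \;:=\; \iint_{\Omega\times\Omega} \frac{|u_1^k(x)-u_1^k(y)|^2}{|x-y|^{N+2s}}\,dxdy \;\leq\; \lambda_1(\Omega^c).
$$
Combined with $\|u_1^k\|_{L^2(\Omega)}=1$, this gives $\sup_k \|u_1^k\|_{H^s(\Omega)} < \infty$.

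Next, since $H^s(\Omega)$ is a reflexive Hilbert space, a subsequence (which I do not relabel) converges weakly in $H^s(\Omega)$ to some $u^\ast \in H^s(\Omega)$. Because $\Omega$ is admissible, hence in particular bounded and Lipschitz, the embedding $H^s(\Omega)\hookrightarrow L^p(\Omega)$ is compact for every $p\in[1,2^\ast_s)$; this yields strong convergence $u_1^k\to u^\ast$ in $L^p(\Omega)$ for all such $p$. Passing to a further subsequence, strong $L^p$ convergence promotes to pointwise convergence almost everywhere in $\Omega$, as required.

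There is no genuine obstacle: the only mildly delicate point is that the functional spaces $H^s_{D_k}(\Omega)$ depend on $k$, so one cannot directly invoke weak compactness within a single fixed space of admissible test functions. I sidestep this by working in the ambient space $H^s(\Omega)$, in which both the uniform bound from the Rayleigh quotient and the compact embedding are classical; the restrictions encoded by the Dirichlet sets $D_k$ play no role in obtaining the convergence stated in \eqref{conv} (they will matter only when one later tries to identify $u^\ast$ and pass to the limit in the eigenvalue equation).
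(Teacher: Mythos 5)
Your proof is correct and follows essentially the same route as the paper: bound the Gagliardo energy over $Q_\Omega$ via the Rayleigh quotient and $\lambda_1^k\le\lambda_1(\Omega^c)$, deduce a uniform $H^s(\Omega)$ bound, and then apply weak compactness in $H^s(\Omega)$ together with the compact embedding into $L^p(\Omega)$ and a further subsequence extraction for a.e.\ convergence. Your write-up is slightly more explicit than the paper's (you spell out the restriction to $\Omega\times\Omega$ and the full range of $p$, whereas the paper only names the $L^2$ embedding), but the argument is the same.
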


\proof Testing a minimizing sequence in \eqref{sequence} with $u_1^{k}$, we find that
\begin{equation}\label{universal}
\dfrac{a_{N,s}}{2} \iint_{Q_\Omega} \dfrac{(u_1^k(x)-u_1^k(y))^2}{|x-y|^{N+2s}}dxdy
=\lambda_1^k\dint_{\Omega}| u_1^k|^2\, dx\le \lambda_{1},
\end{equation}
where $\lambda_1$ is the first eigenvalue of the Dirichlet problem in $\Omega$, i.e.
$$\lambda_1=\lambda_1(\Omega^c)=\inf_{u\in H_{0}^{s}(\Omega), \|u\|_{L^2(\Omega)}=1} \iint_{Q_\Omega} \dfrac{|u(x)-u(y)|^2}{|x-y|^{N+2s}}\,dxdy\,.$$

As a consequence, the  $H^s(\Omega)$- norm of $\{u_1^k\}_{k\in\mathbb{N}}$ is uniformly bounded and then, up to a subsequence, there exists $u^*\in H^s(\Omega)$ such that
$$u_1^k\rightharpoonup u^*,\hbox{   weakly in } H^s(\Omega).$$
By the compact embedding $H^s(\Omega)\subset L^2(\Omega)$ we can assume (again up to a subsequence) that \eqref{conv} holds true.
\qed

\medskip

Let us state an interesting property of the solution of a mixed problem when the set of the prescribed  Neumann condition goes to  infinity.

\begin{Lemma}
 Consider  a function  $u$  such that
$$
\mathcal{N}_s u(x)=0  \qquad \forall x\in N\,,
$$
where $N$ satisfies that
$$
\forall R >0 \qquad N\cap B_R^c \neq \emptyset\,.
$$
Then for all sequences $\{x_j\}_{j}\subset N$
 such that $ |x_j|\to \infty$  as $j \to + \infty$
we have that $\{u(x_j)\}_j $ converges to its average on $\Omega$, that is
$$
\lim_{j \to \infty } u(x_j) = \fint_{\Omega} u(x) dx  .
$$
\end{Lemma}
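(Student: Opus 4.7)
The plan is to rewrite the Neumann condition at $x_j$ as an explicit identity expressing $u(x_j)$ as a weighted average of $u$ over $\Omega$, and then to show that, because $\Omega$ is bounded while $|x_j|\to\infty$, the weight concentrates to a uniform one, so the weighted average collapses to the plain mean.

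\textbf{Step 1: turn the Neumann condition into an averaging identity.} Since $\mathcal{N}_s u(x_j)=0$ and the integral in the definition \eqref{normal} splits, we immediately get
\begin{equation*}
u(x_j)\,\int_\Omega\frac{dy}{|x_j-y|^{N+2s}}
=\int_\Omega\frac{u(y)\,dy}{|x_j-y|^{N+2s}},
\end{equation*}
and both denominators are strictly positive since $x_j\in N\subset\Omega^c$ and $\Omega$ has positive measure. Hence
\begin{equation*}
u(x_j)=\frac{\displaystyle\int_\Omega u(y)\,w_j(y)\,dy}{\displaystyle\int_\Omega w_j(y)\,dy},
\qquad w_j(y):=\frac{1}{|x_j-y|^{N+2s}}.
\end{equation*}

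\textbf{Step 2: normalize and pass to the uniform limit.} Write $|x_j-y|=|x_j|\bigl|\hat x_j-y/|x_j|\bigr|$ with $\hat x_j=x_j/|x_j|$. Because $\Omega$ is bounded, $y/|x_j|\to 0$ uniformly in $y\in\Omega$ as $j\to\infty$, so
\begin{equation*}
\widetilde w_j(y):=|x_j|^{N+2s}\,w_j(y)=\frac{1}{\bigl|\hat x_j-y/|x_j|\bigr|^{N+2s}}
\longrightarrow 1
\quad\text{uniformly on }\Omega.
\end{equation*}
In particular $\widetilde w_j$ is uniformly bounded on $\Omega$ (say by some constant $C$ for all large $j$), and bounded below by a positive constant.

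\textbf{Step 3: conclude.} Multiplying numerator and denominator by $|x_j|^{N+2s}$,
\begin{equation*}
u(x_j)=\frac{\displaystyle\int_\Omega u(y)\,\widetilde w_j(y)\,dy}{\displaystyle\int_\Omega \widetilde w_j(y)\,dy}.
\end{equation*}
By the uniform convergence $\widetilde w_j\to 1$ on $\Omega$, the denominator tends to $|\Omega|$. For the numerator we use that $u\in L^1(\Omega)$ (which follows from $u\in H^s(\Omega)$ or from whatever integrability one is assuming in this context) together with dominated convergence, the domination $|u(y)\widetilde w_j(y)|\le C|u(y)|$ being integrable; thus the numerator tends to $\int_\Omega u(y)\,dy$. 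Dividing,
\begin{equation*}
\lim_{j\to\infty}u(x_j)=\frac{1}{|\Omega|}\int_\Omega u(y)\,dy=\fint_\Omega u(y)\,dy,
\end{equation*}
as claimed. There is no real obstacle here; the only delicate point is making sure the uniform convergence of the normalized weights is genuinely uniform on $\Omega$, which is where the boundedness of $\Omega$ (built into the definition of admissibility) is used.
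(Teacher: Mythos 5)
Your proof is correct and follows essentially the same route as the paper's: rewrite $\mathcal{N}_s u(x_j)=0$ as an averaging identity, factor out $|x_j|^{N+2s}$ to normalize the weights, observe that the normalized weights converge to $1$ on $\Omega$, and pass to the limit (you via uniform convergence plus domination, the paper via pointwise convergence plus the dominated convergence theorem — the same mechanism). The only cosmetic difference is that you make the uniformity of the weight convergence explicit, while the paper invokes DCT directly.
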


\proof
Setting  $x_j=|x_j|\theta_j \in N$ with $\theta_j\in \mathbb{S}^{N-1}$, we have
$$
u(x_j)=\dfrac{\dint_{\Omega}  \dfrac{u(y)}{|\theta_j-\frac{y}{|x_j|}|^{N+2s}}\,dy}{\dint_{\Omega} \dfrac{dy}{|\theta_j-\frac{y}{|x_j|}|^{N+2s}}}
\quad \mbox{ with }
\quad
\lim\limits_{|x_j|\rightarrow\infty} \dfrac{1}{|\theta_j-\frac{y}{|x_j|}|^{N+2s}}=1, \quad \forall y\in\Omega.
$$
So, the results follows from the Lebesgue Dominated Convergence Theorem.
\qed

\medskip

It is worth pointing out that the speed of the asymptotics is of order $\dfrac{1}{|x_j|}$. More precisely, $\exists C>0$ such that
$$
\Big|u(x_j)-\fint_{\Omega} u(y)\,dy\Big|\leq \dfrac{C}{|x_j|} \fint_{\Omega} u(y)\,dy,  \qquad \forall j \,.
$$
To prove it, we see that for $x_j=|x_j|\theta_j\in N$ with $\theta_j\in \mathbb{S}^{N-1}$ and
$$g_{x_j}(y):=\dfrac{1}{|\theta_j-\frac{y}{|x_j|}|^{N+2s}},$$
there holds
$$
u(x_j)-\fint_{\Omega} u(y)\,dy
= \dfrac{\dint_{\Omega} u(y) (g_{x_j}(y)-1)\,dy}{\dint_{\Omega} g_{x_j}(y)\,dy} +\fint_{\Omega} u (y)\,dy \left[ \dfrac{\dint_{\Omega} (1-g_{x_j}(y))\,dy}{\dint_{\Omega} g_{x_j}(y)\,dy}\right].
$$
Now, we have
$$
|g_{x_j}(y)-1|= \left| \dfrac{1}{|\theta_j-\frac{y}{|x_j|}|^{N+2s}}-1\right|\leq C \dfrac{|y|}{|x_j|}\leq \dfrac{C'}{|x_j|}, \quad
 \forall y\in\Omega,
$$
and the Lebesgue Dominated Convergence Theorem gives the result again.

\medskip

\medskip

Motivated by the works \cite{CP} and \cite{D} in the local setting, our goal from now on will be to study what happens to the sequence $\{\lambda_1^k\}_{k\in\N}$ when the sets $D_k$ and $N_k$ change with $k$. As we already said, the fact that in the non local framework the {\it boundary} is the whole $\Omega^c$ makes the situation completely different, since the way in which the sets can move or disappear may be much more varied and complicated.

Actually, considering the decay of the kernel of the operator one may think of two ways of {\it diffusing} sets: making their measure tend to zero (which would be the analogous to the local case) or {\it sending them to $\infty$}.

Let us analyze some examples of these situations before giving the rigorous convergence results. Assume the sets $N_k$ to be the ones that we want to dissipate (the examples are analogous for the sets $D_k$).

\subsection{Sets with measure going to zero: shrinking Neumann sets.\\}
\noindent We find here sets satisfying
\begin{enumerate}
\item $N_k\subset   B_R(0) \cap \Omega^c$,
\item $|N_k|\to 0$ as $k\to\infty$,
\end{enumerate}
for some $R>0$, that is, sets contained in a large ball that disappear when $k\rightarrow +\infty$. Notice that this framework includes the case of nested sets, that is,
$$N_1\supseteq N_2\supseteq N_3\supseteq \ldots\supseteq N_k\supseteq \ldots,$$
whose local analogue has been  studied in \cite{CP}, but also the case when $N_k\cap N_{k+1}=\emptyset$, $\forall k \in \N$.

\subsection{Sets disseminating their mass at infinity.}
\begin{itemize}

\item[(i)] Travelling balls: $N_{k}:= B_{r_{k}}(x_k)$, a ball centered at $x_k$, with radius $r_k$, $0< r_k \leq C$ for all $k\in\N$, provided that $|x_k|$ tends to infinity as $k\rightarrow\infty$.

\begin{center}
\psscalebox{0.3 0.3} % Change this value to rescale the drawing.
{
\begin{pspicture}(0,-5.1032505)(13.828284,5.1032505)
\pscircle[linecolor=black, linewidth=0.04, dimen=outer](9.614142,2.0367496){2.2}
\psline[linecolor=black, linewidth=0.04](0.21414214,5.0367494)(0.21414214,-4.5632505)(13.814142,-4.5632505)
\psdots[linecolor=black, dotsize=0.04](9.614142,2.0367496)
\psline[linecolor=black, linewidth=0.04](9.614142,2.0367496)(0.21414214,-4.5632505)
\psline[linecolor=black, linewidth=0.04](0.21414214,5.0367494)(0.41414216,4.8367496)(0.41414216,4.8367496)
\psline[linecolor=black, linewidth=0.04](0.21414214,5.0367494)(0.014142151,4.8367496)
\psline[linecolor=black, linewidth=0.04](13.814142,-4.5632505)(13.614142,-4.3632507)
\psline[linecolor=black, linewidth=0.04](13.814142,-4.5632505)(13.614142,-4.7632504)
\psdots[linecolor=black, dotsize=0.1](9.614142,2.0367496)
\psdots[linecolor=black, dotsize=0.1](7.824142,0.7867495)
\psdots[linecolor=black, dotsize=0.1](7.834142,0.7967495)
\rput[bl](9.604142,2.4867494){\huge $x_k$}
\rput[bl](8.814142,1.1867495){\huge $r_k$}
\psarc[linecolor=black, linewidth=0.04, linestyle=dotted, dotsep=0.10583334cm, dimen=outer](0.21414214,-4.6032505){9.3}{0.0}{35.141083}
\rput[bl](8.854142,-5.1032505){\huge $|x_k|-r_k$}
\psarc[linecolor=black, linewidth=0.04, linestyle=dotted, dotsep=0.10583334cm, dimen=outer](0.38414216,-4.5032506){11.35}{0.0}{35.734894}
\rput[bl](11.514142,-5.0432506){\huge $|x_k|$}
\psline[linecolor=black, linewidth=0.04, arrowsize=0.173cm 4.0,arrowlength=1.5,arrowinset=0.0]{->}(11.594142,3.7567494)(13.574142,5.2367496)
\end{pspicture}
}
\end{center}
\item[(ii)] Travelling rings: $N_{k}:=\{x: R_{k}<|x|<R_{k}+L_{k}\}$, with $0<L_k\leq \infty$ for all $k\in \N$ and $R_k\rightarrow \infty$ as $k\rightarrow\infty$.
\item[(iii)] Travelling strips: $N_k:=\{x=(x^1,x^2,\ldots,x^N): R_k<x^1<R_k+L_k\}$, with $0< L_k\leq \infty$ for all $k\in \N$ and $R_k\rightarrow \infty$ as $k\rightarrow\infty$.
\item[(iv)] {\it Infinite} sectors: $N_{k}:= \{x=|x|\theta;\, |x|>R_{k}, {\theta \in S\subset\mathbb{S}^{N-1}}\}$, provided that $R_k$ tends to infinity as $k\rightarrow\infty$.
\begin{center}
\psscalebox{0.3 0.3} % Change this value to rescale the drawing.
{
\begin{pspicture}(0,-7.005858)(14.14,7.005858)
\pscircle[linecolor=black, linewidth=0.04, dimen=outer](3.0,-4.005858){3.0}
\psline[linecolor=black, linewidth=0.04](6.56,0.854142)(11.0,6.994142)
\psline[linecolor=black, linewidth=0.04](7.7,-0.20585796)(14.0,4.994142)
\psarc[linecolor=black, linewidth=0.04, linestyle=dashed, dash=0.17638889cm 0.10583334cm, dimen=outer](3.0,-4.005858){6.0}{38.81276}{53.28381}
\psline[linecolor=black, linewidth=0.04, linestyle=dotted, dotsep=0.10583334cm](6.54,0.834142)(2.98,-4.005858)
\psline[linecolor=black, linewidth=0.04, linestyle=dotted, dotsep=0.10583334cm](3.02,-4.005858)(7.7,-0.18585797)
\rput[bl](7.98,-0.74585795){\huge $ R_k$}
\rput[bl](9.66,3.234142){\huge $  N_k$}
\psline[linecolor=black, linewidth=0.04](3.0,-4.005858)(3.0,-4.005858)(3.0,-4.005858)
\psline[linecolor=black, linewidth=0.04](3.0,4.994142)(3.0,-4.005858)
\psline[linecolor=black, linewidth=0.04](3.0,-4.005858)(12.0,-4.005858)
\psline[linecolor=black, linewidth=0.04](12.0,-4.005858)(11.8,-3.805858)(11.8,-3.805858)
\psline[linecolor=black, linewidth=0.04](12.0,-4.005858)(11.8,-4.2058578)
\psline[linecolor=black, linewidth=0.04](3.0,4.994142)(3.2,4.7941422)
\psline[linecolor=black, linewidth=0.04](2.8,4.7941422)(3.0,4.994142)
\rput[bl](10.6,-4.805858){\huge $\mathbb{S}^{N-1}$}
\rput[bl](3.8,-2.005858){\huge $S$}
\end{pspicture}
}
\end{center}
\end{itemize}
Notice that here we can find cases where the measure of the sets $N$ is finite (cases (i) and (ii)) but also, which is more interesting, sets with infinte measure ((iii) and (iv)).
\medskip

We can join all the previous examples (and the corresponding combinations of them) in the following condition:
\begin{equation*}\label{condition}
\hbox{ for every }R>0 \qquad  \lim\limits_{k\rightarrow\infty} |N_{k}\cap B_{R}|=0\,.
\end{equation*}
Indeed, our goal in the next section is to prove that if the above conditions holds, then
$$\lambda_1^k\rightarrow \lambda_1,$$
the first eigenvalue of the Dirichlet problem. Furthermore, we will see that it is a necessary and sufficient condition. Let us point out that, although this is the {\it expectable} result according to \cite{CP}, the non local boundary data makes the conclusion (and the casuistry) not obvious at all.

%%%%%%%%%%%%%%%%%%%%%%%%%%%%%
%%%%%%%%%%%%%%%%%%%%%%%%%%%%%
%%%%%%%%%%%%%%%%%%%%%%%%%%%%%

\section{Dissipating Neumann sets}\label{secNeu}
\setcounter{equation}{0}

Let us consider $\{N_{k}\}_{k\in\N}$ and $\{D_k\}_{k\in\N}$  sequences  of  open sets in $ \Omega^c$ satisfying \eqref{nkdk}. For any $k$, let $\lambda_{1}^{k}$ be the first eigenvalue associated to the mixed problem on $\Omega$  related to  $D_{k}$ and $N_k$ and let  $u_{1}^{k}\in H_{D_{k}}^{s}(\Omega)$, with $\|u_{1}^{k}\|_{L^{2}(\Omega)}=1$, be the corresponding eigenfunction, i.e., the solution to \eqref{sequence}.

Consider now $\varphi_1\in H_{0}^{s}(\Omega)$, the   first positive eigenfunction of the Dirichlet problem, i.e., the solution of
\begin{equation}\label{Dirichlet}
\left\{\begin{array}{ll}
(-\Delta)^{s} \varphi_1 = \lambda_{1}\varphi_1\qquad &\inn \Omega,\\
\varphi_1= 0 &\inn  \Omega^c,
\end{array}\right.
\end{equation}
with $\|\varphi_1\|_{L^{2}(\Omega)}=1$.
We have first the following result.
\begin{Lemma}\label{integrable}
Let $\Omega$ be admissible and define for $x\in\Omega^c$ the function
$$
\Phi(x):= \dint_{\Omega}\dfrac{ \varphi_1(y) }{|x-y|^{N+2s}}\,dy.
$$
Then $\Phi\in L^{1}(\mathbb{R}^{N}\setminus\Omega)$.
\end{Lemma}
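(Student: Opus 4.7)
The plan is to rewrite the $L^1$-norm of $\Phi$ on $\Omega^c$ via Fubini as a weighted integral on $\Omega$ and then exploit the sharp boundary decay of the Dirichlet eigenfunction. Applying Tonelli,
$$\int_{\Omega^c}\Phi(x)\,dx=\int_{\Omega}\varphi_1(y)\left(\int_{\Omega^c}\frac{dx}{|x-y|^{N+2s}}\right)dy.$$
For each $y\in\Omega$, every $x\in\Omega^c$ satisfies $|x-y|\ge d(y):=\mbox{dist}(y,\partial\Omega)$, so $B_{d(y)}(y)\subset\Omega$ and the one-line radial computation used in Lemma \ref{punctual} (with the roles of $\Omega$ and $\Omega^c$ interchanged) gives the interior counterpart
$$\int_{\Omega^c}\frac{dx}{|x-y|^{N+2s}}\le\frac{C}{d(y)^{2s}}.$$

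The second step is to insert the boundary behavior of $\varphi_1$. Since $\Omega$ is admissible ($C^{1,1}$ with the exterior sphere condition), the optimal fractional boundary estimate of Ros-Oton--Serra yields
$$0\le\varphi_1(y)\le C\,d(y)^s\qquad\text{for every }y\in\Omega.$$
Plugging this bound into the previous display reduces the problem to proving
$$\int_{\Omega}\frac{dy}{d(y)^{s}}<\infty.$$
Since $s<1$ and $\partial\Omega$ is $C^{1,1}$ (hence $\mathcal{C}^\beta$ for any $\beta\le 1$), this integrability follows from the interior analogue of Lemma \ref{regularidad}: the cone-of-exclusion argument of Claim 1 is symmetric with respect to which side of the graph the point lies, and covering $\partial\Omega$ by finitely many boundary charts together with the trivial contribution from the bulk of $\Omega$ finishes the bound.

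The only external ingredient is the boundary estimate $\varphi_1\le C d^s$, which is the main obstacle in the sense that it relies on results outside the paper; the remaining steps are straightforward adaptations of the tools already established in Section \ref{1}. A slicker (but less self-contained) alternative would be to observe that, since $\varphi_1\equiv 0$ on $\Omega^c$, one has $\mathcal{N}_s\varphi_1(x)=-a_{N,s}\Phi(x)$ for $x\in\Omega^c$, and then apply the Gauss-type formula \eqref{gauss} to $\varphi_1$ to obtain $\int_{\Omega^c}\Phi\,dx=\tfrac{\lambda_1}{a_{N,s}}\|\varphi_1\|_{L^1(\Omega)}$ directly; this, however, requires first justifying \eqref{gauss} for functions only as regular as the first eigenfunction.
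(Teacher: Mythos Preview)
Your argument is correct and uses the same key ingredient as the paper --- the $C^s$ boundary regularity of $\varphi_1$ (equivalently, the decay $\varphi_1(y)\le C\,d(y)^s$) combined with the distance--function estimates of Section~\ref{1}. The only difference is the order of integration: the paper keeps $x\in\Omega^c$ as the outer variable, writes $\varphi_1(y)=\varphi_1(y)-\varphi_1(x)\le C|x-y|^s$ to lower the kernel exponent from $N+2s$ to $N+s$, and then applies Lemmas~\ref{punctual} and~\ref{regularidad} \emph{as stated} (on the exterior side); you instead apply Fubini, integrate the kernel over $\Omega^c$ first, and then need the \emph{interior} analogues of those two lemmas. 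Since the proofs of Lemmas~\ref{punctual} and~\ref{regularidad} are symmetric in the two sides of $\partial\Omega$, this is a cosmetic rather than a substantive difference. Your Gauss--formula alternative is a genuinely different (and elegant) route not pursued in the paper, though as you note it requires extending \eqref{gauss} beyond the $C^2$ class.
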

\begin{proof} Take $R>0$ such that $\Omega \subset B_R$. Since $\varphi_1$ is bounded, we easily see that $\Phi$ is integrable in $\mathbb R^N\setminus B_{2R}$.

Now, thanks to \cite[Proposition 7.2]{R}, we have that  $\varphi_1\in\mathcal{C}^s(\ren)$ and thus
$$\int_{B_{2R}\setminus\Omega}\dint_{\Omega}\dfrac{ \varphi_1(y) }{|x-y|^{N+2s}}\,dydx=\int_{B_{2R}\setminus\Omega}\dint_{\Omega}\dfrac{ \varphi_1(y)-\varphi_1(x) }{|x-y|^{N+2s}}\,dydx\leq C \int_{B_{2R}\setminus\Omega}\dint_{\Omega}\dfrac{ 1}{|x-y|^{N+s}}\,dydx.$$
Consequently,  by Lemma \ref{punctual},
$$\int_{B_{2R}\setminus\Omega}\dint_{\Omega}\dfrac{ \varphi_1(y) }{|x-y|^{N+2s}}\,dydx\leq C \int_{B_{2R}\setminus\Omega}\frac{1}{dist(x,\partial\Omega)^s}\,dx.$$
We conclude applying Lemma \ref{regularidad}.
\end{proof}

\medskip

We can  prove our first main theorem.
\begin{proof}[Proof of Theorem \ref{theoNeu}] Taking $\varphi_1$ in \eqref{Dirichlet} as a test function in \eqref{sequence} and $u_{1}^{k}$  as a test function in \eqref{Dirichlet}, and integrating by parts as in \eqref{parts}, it follows that
\begin{equation}\label{sub}
\begin{array}{rcl}
(\lambda_{1}-\lambda_{1}^{k}) \dint_{\Omega}\varphi_1(x) u_1^{k}(x)\,dx&=& -\dint_{\mathbb{R}^{N}\setminus\Omega} \mathcal{N}_{s}\varphi_1(x)u_1^{k}(x)\, dx\\
\\
&=& a_{N,s} \dint_{N_{k}} \dint_{\Omega} \dfrac{\varphi_{1}(y) u_{1}^{k}(x) }{|x-y|^{N+2s}}\ dy dx.
\end{array}
\end{equation}
This shows that statement $(A)$ implies the following
$$
(A') \;\;\lim\limits_{k\rightarrow\infty}\dint_{N_{k}} \dint_{\Omega} \dfrac{\varphi_{1}(y) u_{1}^{k}(x) }{|x-y|^{N+2s}}\ dy dx=0.
$$
Conversely, let us prove that $(A')$ implies $(A)$, so that both conditions, $(A)$ and $(A')$, are equivalent. Since $0\le \lambda_{1}^{k}\leq\lambda_{1}$  we have
$$
0\le \liminf_{k\to\infty}  \lambda_1^k \le \limsup_{k\to\infty}  \lambda_1^k \le \lambda_1.
$$
Thus, it is enough to show that $(A')\implies \liminf_{k\to\infty}  \lambda_1^k = \lambda_1$.  So, we take a subsequence converging to the $\liminf$ of the $ \lambda_1^k$'s. We know from \eqref{conv} that we can extract a sub-subsequence
$\{\lambda_1^{k_j}\}_j$ such that
\begin{equation*}\begin{split}
u_{1}^{k_j} &\rightharpoonup u^{*}\inn H^{s}(\Omega) \hbox{ and }
u_{1}^{k_j}\rightarrow u^{*}\inn L^{2}(\Omega)\hbox{ and a.e. in }\Omega,
\end{split}\end{equation*}
with $u^*\gneq 0$. Hence, $$\lim\limits_{j\rightarrow\infty} \dint_{\Omega} \varphi_{1}(x)u_{1}^{k_j}(x)\,dx= \dint_{\Omega} \varphi_{1}(x)u^{*}(x)\,dx >0. $$
Applying this  and the statement $(A')$ in (\ref{sub}) we get that  $\lim_{j\to\infty}  \lambda_1^{k_j} = \lambda_1$, and therefore the initial subsequence converges to $\lambda_1$ too.  Hence, $\liminf_{k\to\infty}  \lambda_1^k = \lambda_1$ and $(A)$ follows.
To finish with the proof of Theorem \ref{theoNeu}, it remains to show that statements $(A')$ and $(B)$ are also equivalent.

\

We first prove that $(B)$ implies $(A')$. To that end, take $R$ large enough so that $\Omega\subset B_{\frac{R}{2}}$,   then
\begin{equation*}\begin{split}
J_{k}&:= \dint_{N_{k}} \dint_{\Omega} \dfrac{\varphi_{1}(y) u_{1}^{k}(x) }{|x-y|^{N+2s}}\ dy dx\\
&\leq \dint_{\mathbb{R}^{N}\setminus B_{R}}\dint_{\Omega} \dfrac{\varphi_{1}(y) u_{1}^{k}(x) }{|x-y|^{N+2s}}\ dy dx+ \dint_{N_k\cap B_{R}}\dint_{\Omega}\dfrac{\varphi_{1}(y) u_{1}^{k}(x) }{|x-y|^{N+2s}}\ dy dx\\
&=: J_{k}^{1}+J_{k}^{2}.
\end{split}\end{equation*}
Using that $\varphi_1$ is bounded, Proposition \ref{u1k} and the fact that if $y\in\Omega$ and $|x|>R$ then $|x-y|>\frac{|x|}{2}$, we have that
$$
J^{1}_{k} \leq C \dint_{\mathbb{R}^{N}\setminus B_{R}}\dint_{\Omega} \Big(\dfrac{1}{|x|/2}\Big)^{N+2s}\ dy dx= C 2^{N+2s} |\Omega|\dint_{|x|>R} \dfrac{dx}{|x|^{N+2s}}= C_{N,s} |\Omega| \dfrac{1}{R^{2s}}.
$$
As a consequence, given $\varepsilon>0$, we can choose $R$ sufficiently large so that $J_{1}^{k}\leq \dfrac{\varepsilon}{2}, \forall k$.

In order to estimate $J_{k}^2$, we make use of Proposition \ref{u1k} and Lemma \ref{integrable},
$$
J_{k}^{2}\leq C\dint_{N_{k}\cap B_{R}}\dint_{\Omega} \dfrac{\varphi_{1} (y)}{|x-y|^{N+2s}}\,dydx= C\dint_{N_{k}\cap B_{R}}\Phi(x)\,dx.
$$
Since the measure $d\mu=\Phi(x)dx$ is absolutely continuous with respect to the Lebesgue measure, there exists $\delta>0$, such that  if $N$ is a measurable subset of $\mathbb{R}^{N}\setminus \Omega,$ with $|N|<\delta$, then
$$
\dint_{N} \Phi(x)\,dx<\frac{\varepsilon}{2C},
$$
for $\varepsilon$ given above. Hence, condition $(B)$ implies the existence of $k_0>0$ such that, $\forall k\geq k_0$, $|N_{k}\cap B_{R}|<\delta$. We conclude that $|J_{k}^{2}|<\frac{\varepsilon}{2}$ and therefore $J_{k}<\varepsilon$, $\forall k\geq k_{0}$.
\medskip

We now prove the other implication, that is, $(A')$ implies $(B)$; assume that for $R$ large we have
\begin{equation}\label{J2}
\lim\limits_{k\rightarrow\infty} J_{k}^{2} = \lim\limits_{k\rightarrow\infty} \dint_{N_{k}\cap B_{R}}\dint_{\Omega} \dfrac{\varphi_{1}(y) u_{1}^{k}(x)}{|x-y|^{N+2s}}\, dydx=0.
\end{equation}
Fix $\delta>0$ small and consider the {\it strips} around $\Omega$,
$$
\Omega_{\delta}:= \{ x \in \Omega^c: \mbox{ dist}(x,\Omega)\leq\delta\}.
$$

\noindent Assume $\Omega_{\delta}\subset B_{R}$. We claim that
\begin{equation}\label{claim}
\lim\limits_{k\rightarrow\infty}|N_{k}\cap B_{R}\setminus\Omega_{\delta}|=0.
\end{equation}
To prove this, we observe that
$$
\begin{array}{rcl}
J_{k}^{2}&\geq& \dint_{N_{k}\cap B_{R}\setminus\Omega_{\delta}}\dint_{\Omega} \dfrac{\varphi_{1}(y) u_{1}^{k}(x)}{|x-y|^{N+2s}}\ dydx\\
&&\\
&\geq&\dfrac{1}{(2R)^{N+2s}} \Big(\dint_{\Omega} \varphi_{1} (y)\,dy\Big)\Big(\dint_{N_{k}\cap B_{R}\setminus\Omega_{\delta}} u_{1}^{k}(x)\,dx\Big),
\end{array}
$$
since $x,y\in B_{R}$ implies $|x-y|\leq 2R$. Thanks to the $\delta$-separation from $\Omega$, we can estimate $u_{1}^{k}(x)$ from below. Indeed, let us recall that on $N_{k}$,
$$
u_{1}^{k}(x)= \dfrac{\dint_{\Omega} \dfrac{u_1^k(y)}{|x-y|^{N+2s}}\, dy}{\dint_{\Omega} \dfrac{dy}{|x-y|^{N+2s}}}.
$$
Now, $y\in\Omega$ and $x\in N_{k}\setminus\Omega_{\delta}$ implies $|x-y|>\delta$ and therefore,
$$
u_{1}^{k}(x)\geq \dfrac{\dfrac{1}{(2R)^{N+2s}}\dint_{\Omega} u_{1}^{k}(y) dy}{\dint_{\Omega} \dfrac{dy}{\delta^{N+2s}}}= \Big(\dfrac{\delta}{2R}\Big)^{N+2s} \dfrac{1}{|\Omega|} \dint_{\Omega} u_{1}^{k} (y)\,dy.
$$
Thanks to Proposition \ref{u1k}, we obtain the estimate
$$
J_{k}^{2}\geq C \Big(\dint_{\Omega} \varphi_{1} (y)\,dy\Big)|N_{k}\cap B_{R}\setminus\Omega_{\delta}|,
$$
where $C$ is a positive constant that depends on $\delta$ and $R$, but independent of $k$. Letting $k\rightarrow \infty$ and applying \eqref{J2} we conclude the proof of the claim.

To obtain $(B)$ we proceed by contradiction. Assume it does not hold. Then, there exists a subsequence, $\{N_{k_{j}}\}_{j\in\N}$ and values $R,\mu>0$ such that
\begin{equation*}\label{cota}
|N_{k_j}\cap B_{R}|\geq \mu, \;\forall j.
\end{equation*}
Since $\lim\limits_{\delta\rightarrow 0^{+}} |\Omega_{\delta}|=0$, we can choose $\delta_{0}>0$ such that $|\Omega_{\delta_0}|\leq\dfrac{\mu}{2}$. But in this case
$$
|N_{k_j}\cap B_{R}\setminus\Omega_{\delta_0}|\geq \dfrac{\mu}{2}, \;\forall j,
$$
which contradicts the claim \eqref{claim}.
\end{proof}
\begin{remark} Let $u^*$ be the function obtained in \eqref{conv}. Observe that we do not know \lq\lq a priori" how $u^*$ is defined  pointwise on $\Omega^c$. However, we have that if $x\in \Omega^c$ and
$ x\notin \limsup N_k=\bigcap_{k=1}^\infty \bigcup_{j=k}^\infty N_j$, then $\displaystyle\exists \lim_{k\to\infty} u_1^k(x)=0$. Hence, if $\left| \limsup N_k\right|=0$, something that can be attained at least for a subsequence,  we can define $u^{**}$   almost everywhere as the pointwise  limit of the corresponding sequence $\{u_1^k\}_k$. In particular,
$u^{**}=u^{*}$ on $\Omega$. Since
$$
\dfrac{a_{N,s}}{2}\dint_{Q_\Omega} \dfrac{|u_{1}^{k}(x)-u_{1}^{k}(y)|^2}{|x-y|^{N+2s}}\,dxdy = \lambda_{1}^{k} \dint_{\Omega} |u_{1}^{k}|^2(x)\,dx\leq \lambda_{1},
$$
by Fatou's Lemma, the fact that $\|u^{**}\|_{L^{2}(\Omega)}=1$ and the uniqueness of the eigenfunction of the Dirichlet problem,  it follows that $u^{**}\in H_0^s(\Omega)$ and that $u^{**}=\varphi_1$.
\end{remark}

%%%%%%%%%%%%%%%%%%%%%%%%%%%%%
%%%%%%%%%%%%%%%%%%%%%%%%%%%%%
%%%%%%%%%%%%%%%%%%%%%%%%%%%%%

\section{Dissipating Dirichlet sets}\label{secDir}
\setcounter{equation}{0}

The aim of this section is to reproduce the analysis performed in Section \ref{secNeu} when the Dirichlet sets dissipate. Indeed, we have the following result.

\begin{Theorem}\label{N1}
Let $\Omega$ be an admissible domain. If $\lim\limits_{k\rightarrow \infty}\lambda_{1}^{k}=0$, then
$$
 \hbox{ for any }R>0\ ,\qquad \lim\limits_{k\rightarrow\infty} |D_{k}\cap B_{R}|=0 \,.
$$
\end{Theorem}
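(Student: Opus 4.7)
My plan is to bound $|D_k\cap B_R|$ directly from the variational characterization of $\lambda_1^k$, exploiting the fact that the eigenfunction $u_1^k$ vanishes on $D_k$. The key observation is that when $x,y$ both lie in a common bounded ball $B_R$, the kernel $|x-y|^{-(N+2s)}$ is bounded below by $(2R)^{-(N+2s)}$, so the normalization $\|u_1^k\|_{L^2(\Omega)}=1$ converts a single piece of the Gagliardo energy into a quantity proportional to $|D_k\cap B_R|$.

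I first reduce to the case $\Omega\subset B_R$ (for smaller $R$ the conclusion follows from the monotonicity $|D_k\cap B_{R'}|\leq|D_k\cap B_R|$). Fix such an $R$. For $x\in D_k\cap B_R$ and $y\in\Omega\subset B_R$, we have $|x-y|\leq 2R$. Substituting $u_1^k(x)=0$ on $D_k\subset\Omega^c$ into the Rayleigh quotient, and keeping only the region $\{x\in D_k\cap B_R,\,y\in\Omega\}\subset Q_\Omega$,
\begin{align*}
\lambda_1^k = \iint_{Q_\Omega}\frac{|u_1^k(x)-u_1^k(y)|^2}{|x-y|^{N+2s}}\,dxdy
&\geq \int_{D_k\cap B_R}\int_\Omega \frac{|u_1^k(y)|^2}{|x-y|^{N+2s}}\,dydx\\
&\geq \frac{|D_k\cap B_R|}{(2R)^{N+2s}}\,\|u_1^k\|_{L^2(\Omega)}^2 = \frac{|D_k\cap B_R|}{(2R)^{N+2s}}.
\end{align*}
Rearranging gives $|D_k\cap B_R|\leq (2R)^{N+2s}\lambda_1^k\to 0$, which is in fact a quantitative form of the claim.

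There is no real obstacle: the argument uses neither the weak convergence of eigenfunctions (Proposition \ref{weakly}), nor the admissibility of $\Omega$ beyond boundedness, nor the restriction $s<1/2$ needed in the converse (Theorem \ref{introd}). The reason the converse implication is much harder is that it would require an \emph{obstruction}, showing that dispersed Dirichlet sets still carry enough variational weight, whereas in the present direction the Dirichlet condition contributes directly to the energy in a quantifiable way.
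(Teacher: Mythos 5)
Your proof is correct, and it takes a genuinely more direct route than the paper. The paper proves Theorem \ref{N1} by testing the eigenvalue equation against the first Neumann eigenfunction $\psi_1\equiv|\Omega|^{-1/2}$: it obtains the identity
$\lambda_1^k\int_\Omega\psi_1 u_1^k\,dx = a_{N,s}\int_{D_k}\int_\Omega\frac{\psi_1(x)u_1^k(y)}{|x-y|^{N+2s}}\,dydx$,
and then bounds the inner integral from below on $D_k\cap B_R$, crucially invoking Proposition \ref{u1k}\,iv) to guarantee that $\fint_\Omega u_1^k\geq c>0$ uniformly in $k$. You avoid all of this by lower-bounding the Rayleigh quotient directly: restricting the Gagliardo energy to the slice $\{x\in D_k\cap B_R,\,y\in\Omega\}\subset Q_\Omega$, using $u_1^k=0$ on $D_k$ to linearize the numerator, bounding the kernel below by $(2R)^{-(N+2s)}$, and invoking the $L^2$-normalization. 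This is shorter, gives the clean quantitative bound $|D_k\cap B_R|\leq C_{R,N,s}\,\lambda_1^k$, and needs neither the uniform lower bound on $\fint_\Omega u_1^k$ nor the Neumann eigenfunction machinery. What the paper's route buys is structural symmetry: the identity \eqref{equivPsi} is deliberately written as a two-way equivalence so that the \emph{same} setup feeds into the converse (Theorem \ref{N2} and Propositions \ref{DirFar}--\ref{condC}), mirroring the duality argument of Theorem \ref{theoNeu}. Your observation that the converse is genuinely harder is accurate and well put. One minor point of notation: the paper vacillates between defining $\lambda_1^k$ as the infimum of the Gagliardo quotient with and without the prefactor $a_{N,s}/2$ (compare the definition of $\lambda_1(D)$ in the introduction with \eqref{universal}); you followed the introduction's convention, which is fine since the discrepancy is only a fixed multiplicative constant and does not affect the conclusion.
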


We prove the result adapting the same idea as in Theorem \ref{theoNeu}.

\begin{proof}
Let $\psi_1$ be the first eigenfunction of the Neumann problem in $\Omega$, i.e. it is easy to see that the first eigenvalue is $0$ and thus it satisfies
\begin{equation}\label{probPsi}
\begin{cases}
(-\Delta)^{s} \psi_1 = 0 \qquad &\inn \Omega,\\
\mathcal{N}_{s} \psi_1= 0 &\inn \Omega^c.
\end{cases}
\end{equation}
Notice that, consequently,  $\psi_1= \dfrac{1}{|\Omega|^{1/2}}$: Testing with $u_1^k$ in \eqref{probPsi}, and with $\psi_1$ in \eqref{sequence} we obtain that
\begin{equation}\label{equivPsi}
\lim\limits_{k\rightarrow \infty}\lambda_{1}^{k}=0\qquad \implies \qquad  \lim_{k\rightarrow\infty}\int_{D_k}\int_\Omega\frac{\psi_1(x)u_1^k(y)}{|x-y|^{N+2s}}\,dydx=0.
\end{equation}
The converse statement is also true by an argument similar to the one in the proof of Theorem \ref{theoNeu}.
In particular,
\begin{equation}\label{ik}
\lim_{k\rightarrow\infty}I_k=0,
\end{equation}
where,
$$
 \qquad I_k:=\int_{D_k\cap B_R}\int_\Omega\frac{\psi_1(x)u_1^k(y)}{|x-y|^{N+2s}}\,dydx, \quad  \mbox{ for } \; R \;\mbox{ given}.$$
For $R$ large enough, so that $\Omega \subset B_R$ and using the non negativity of $u_1^k$, we have that \begin{equation*}\begin{split}
I_k&\geq \frac{ 1}{|\Omega|^{1/2}  (2R)^{N+2s}}\left |D_k\cap B_R\right |\int_\Omega u_1^k\,dy\,.
\end{split}\end{equation*}
Now, from Proposition \ref{u1k} we have
\begin{equation*}
I_k \geq \frac{c\, |\Omega|^{1/2} }{(2R)^{N+2s}}\left |D_k\cap B_R\right |,
\end{equation*}
uniformly in $k$. Therefore, if \eqref{ik} holds, we conclude that
$$\lim_{k\rightarrow\infty}|D_k\cap B_R|=0.$$
\end{proof}

If we want to use the techniques developed in Theorem \ref{theoNeu} to prove the other implication, the lack of regularity of the functions $u_1^k$ reduces the problem to the integrability of the kernel of the operator near  $\partial \Omega$.
This yields to a restriction on the range of  admissible $s$.

\begin{Theorem}\label{N2}
Consider $\Omega$ an admissible  domain and  $0<s<1/2$. If
$$
 \hbox{ for all }R>0, \qquad \lim\limits_{k\rightarrow\infty} |D_{k}\cap B_{R}|=0,
$$
then $\lim\limits_{k\rightarrow \infty}\lambda_{1}^{k}=0$.
\end{Theorem}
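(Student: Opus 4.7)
My plan is to mirror the strategy used for Theorem \ref{theoNeu}, but with the roles of Dirichlet and Neumann swapped. The starting point is the identity derived in the proof of Theorem \ref{N1}: testing \eqref{sequence} against the (constant) Neumann eigenfunction $\psi_1\equiv|\Omega|^{-1/2}$, and using the integration-by-parts formula \eqref{parts} together with the facts that $\psi_1$ is constant and $u_1^k=0$ on $D_k$, one gets
\begin{equation*}
\lambda_1^k \int_\Omega \psi_1(x)\, u_1^k(x)\,dx \;=\; a_{N,s}\int_{D_k}\int_\Omega \frac{\psi_1(x)\,u_1^k(y)}{|x-y|^{N+2s}}\,dy\,dx \;=:\; a_{N,s}\,J_k.
\end{equation*}
By item iv) of Proposition \ref{u1k} the left-hand integral stays bounded below by a positive constant independent of $k$, so it suffices to prove $J_k\to 0$.

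Split $J_k=J_k^1+J_k^2$, where $J_k^1$ integrates $x$ over $D_k\setminus B_R$ and $J_k^2$ over $D_k\cap B_R$, with $R$ large enough that $\Omega\subset B_{R/2}$. For the tail $J_k^1$, one uses that $|x-y|\ge |x|/2$ when $|x|>R$ and $y\in\Omega$, together with the uniform $L^\infty$ bound on $u_1^k$ from Proposition \ref{u1k}~iii), to estimate
\begin{equation*}
J_k^1 \;\le\; C\,|\Omega|\int_{|x|>R}\frac{dx}{|x|^{N+2s}} \;\le\; \frac{C'}{R^{2s}},
\end{equation*}
uniformly in $k$. So given $\varepsilon>0$ we fix $R$ so large that $J_k^1<\varepsilon/2$ for every $k$.

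The main step is $J_k^2$. Using the uniform bound $u_1^k\le M$ and Lemma \ref{punctual} with $\alpha=2s$,
\begin{equation*}
J_k^2 \;\le\; C\int_{D_k\cap B_R}\left(\int_\Omega \frac{dy}{|x-y|^{N+2s}}\right)dx \;\le\; C\int_{D_k\cap B_R}\frac{dx}{\mathrm{dist}(x,\partial\Omega)^{2s}}.
\end{equation*}
This is exactly where the restriction $s<1/2$ enters: it guarantees $2s<1$, so Lemma \ref{regularidad} (applied with $\alpha=2s$ and $\beta=1$, valid since $\Omega$ is $C^{1,1}$) gives that $\mathrm{dist}(\cdot,\partial\Omega)^{-2s}\in L^1_{\mathrm{loc}}(\mathbb{R}^N\setminus\Omega)$. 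Thus $d\mu(x)=\mathrm{dist}(x,\partial\Omega)^{-2s}\,dx$ is a finite measure on $B_R\setminus\Omega$, absolutely continuous with respect to Lebesgue measure.

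By absolute continuity, there exists $\delta>0$ such that $\mu(E)<\varepsilon/(2C)$ whenever $E\subset B_R\setminus\Omega$ has Lebesgue measure $|E|<\delta$. The hypothesis $|D_k\cap B_R|\to 0$ then yields $k_0$ with $|D_k\cap B_R|<\delta$ for all $k\ge k_0$, so $J_k^2<\varepsilon/2$. Combining with the tail estimate, $J_k<\varepsilon$ for $k\ge k_0$, hence $J_k\to 0$ and $\lambda_1^k\to 0$. The hard point of the argument is precisely this uniform integrability of the singular kernel near $\partial\Omega$; without $s<1/2$ the weight $\mathrm{dist}(x,\partial\Omega)^{-2s}$ is no longer locally integrable, and a different technique (presumably exploiting more refined boundary regularity of $u_1^k$) would be required.
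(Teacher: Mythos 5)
Your proof is correct and follows essentially the same route as the paper's: reduce to the vanishing of $\int_{D_k}\int_\Omega |x-y|^{-N-2s}\psi_1(x)u_1^k(y)\,dy\,dx$ via the integration-by-parts identity with the constant Neumann eigenfunction, split into a far-away tail (controlled by $|x-y|\ge|x|/2$ and the decay $R^{-2s}$) and a near part over $D_k\cap B_R$, bound the near part by $\int_{D_k\cap B_R}\mathrm{dist}(x,\partial\Omega)^{-2s}dx$ using the uniform $L^\infty$ bound and Lemma \ref{punctual}, and invoke Lemma \ref{regularidad} (where $2s<1$ is exactly what is needed) together with absolute continuity of the resulting measure. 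This is the paper's argument, presented with slightly more explicit bookkeeping of the absolute-continuity step; there is no substantive difference.
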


\begin{proof}
Thanks to the converse statement in \eqref{equivPsi}, our goal is to prove that
$$\lim_{k\rightarrow\infty}\int_{D_k}\int_\Omega\frac{\psi_1(x)u_1^k(y)}{|x-y|^{N+2s}}\,dydx=0,$$
where $\psi_1= \frac{1}{|\Omega|^{1/2}}$ solves \eqref{probPsi}. Suppose $R$ large enough so that $\Omega\subset B_{R/2}$. Notice first that, by Proposition \ref{u1k}, there exists $C=C(N,\Omega,s)$ such that
\begin{equation*}
I_k^1:=\int_{\ren\setminus B_R}\int_\Omega\frac{\psi_1(x)u_1^k(y)}{|x-y|^{N+2s}}\,dydx\leq C \int_{\ren\setminus B_R}\int_\Omega\left(\frac{1}{|x|/2}\right)^{N+2s}dydx\leq \frac{\tilde{C}}{R^{2s}}.
\end{equation*}
Thus, given any $\varepsilon>0$, we can choose $R$ large enough such that
\begin{equation}\label{I1}
I_k^1\leq \frac{\varepsilon}{2}.
\end{equation}
Furthermore, by Proposition \ref{u1k} iii), we have that
\begin{equation}\label{I2}
I_k^2:=\int_{D_k\cap B_R}\int_\Omega\frac{\psi_1(x)u_1^k(y)}{|x-y|^{N+2s}}\,dydx\leq C\int_{D_k\cap B_R}\int_\Omega\frac{1}{|x-y|^{N+2s}}\,dydx.
\end{equation}
Since $0<2s<1$, we can apply Lemma \ref{punctual} and Lemma \ref{regularidad} to conlude that $I_k^2<\frac{\varepsilon}{2}$ for some $k$ large enough. Therefore, for every $\varepsilon >0$ there exists $k_0>0$ such that
$$\int_{D_k}\int_\Omega\frac{\psi_1(x)u_1^k(y)}{|x-y|^{N+2s}}\,dydx\leq I_k^1+I_k^2<\varepsilon\;\;\hbox{ for every }\;\;k\geq k_0,$$
and the result follows.
\end{proof}

\begin{remark}
Notice that the restriction on $s$ arises in order to estimate the term in \eqref{I2}. Here, to apply the integrability Lemma \ref{regularidad} we need $2s$ to be less than 1. This restriction  does not appear in the case treated in Section \ref{secNeu} because we can take advantage of the regularity of $\varphi_1$, the eigenfunction of the Dirichlet problem, to reduce the singularity of the kernel.
\end{remark}

In the case $1/2\leq s <1$ we can give partial results. In particular, we can prove the result when the Dirichlet sets do not collapse to the boundary of $\Omega$.

\begin{Proposition}\label{DirFar}
Let $s\in (0,1)$. If
$$
\lim\limits_{k\rightarrow\infty} |D_{k}\cap B_{R}|=0 \qquad \hbox{ for all }R>0,
$$
and
$$\exists\,\delta, k_0>0 \;\;\hbox{ s.t. }\;\;dist(D_k,\Omega)>\delta\;\;\;\forall \,k\geq k_0,$$
then $\lim\limits_{k\rightarrow \infty}\lambda_{1}^{k}=0$ up to a subsequence.
\end{Proposition}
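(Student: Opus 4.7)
\textbf{Proof plan for Proposition \ref{DirFar}.} The strategy is to follow the proof of Theorem \ref{N2} but to replace the delicate kernel integrability step (Lemmas \ref{punctual} and \ref{regularidad}), which forced $s<1/2$, by a trivial estimate coming from the geometric separation $\mathrm{dist}(D_k,\Omega)>\delta$. By the converse implication in \eqref{equivPsi} together with Proposition \ref{u1k} iv), which gives the uniform lower bound $\int_\Omega u_1^k\geq c|\Omega|>0$, it suffices to show that
$$J_k := \int_{D_k}\int_\Omega \frac{\psi_1(x)\, u_1^k(y)}{|x-y|^{N+2s}}\, dy\, dx \longrightarrow 0 \quad \text{as } k\to\infty,$$
where $\psi_1\equiv |\Omega|^{-1/2}$ is the constant first Neumann eigenfunction.

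The plan is to split $J_k = J_k^{\mathrm{far}} + J_k^{\mathrm{near}}$ at a radius $R$ chosen so that $\Omega\subset B_{R/2}$, where $J_k^{\mathrm{far}}$ is the integral over $D_k\setminus B_R$ and $J_k^{\mathrm{near}}$ the one over $D_k\cap B_R$. For the far piece, the same computation as in \eqref{I1} applies without modification: using the uniform $L^\infty$-bound on $u_1^k$ from Proposition \ref{u1k} iii), and $|x-y|\geq |x|/2$ for $|x|>R$, $y\in\Omega$, we get $J_k^{\mathrm{far}} \leq C R^{-2s}$ uniformly in $k$. Given $\varepsilon>0$, this fixes $R$ so that $J_k^{\mathrm{far}}<\varepsilon/2$ for all $k$.

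For the near piece, the new ingredient is the separation hypothesis: for $k\geq k_0$, every pair $(x,y)\in D_k\times \Omega$ satisfies $|x-y|>\delta$, so the kernel is uniformly bounded by $\delta^{-(N+2s)}$ with no singularity. Combining this with $\int_\Omega u_1^k\,dy\leq |\Omega|^{1/2}$ (from $\|u_1^k\|_{L^2(\Omega)}=1$ and Cauchy--Schwarz) yields
$$J_k^{\mathrm{near}} \leq \frac{|\Omega|^{-1/2}}{\delta^{N+2s}}\,|D_k\cap B_R|\int_\Omega u_1^k(y)\,dy \leq \frac{|D_k\cap B_R|}{\delta^{N+2s}},$$
which tends to $0$ as $k\to\infty$ by the diffusion hypothesis, for the fixed $R$ above. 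Hence $J_k^{\mathrm{near}}<\varepsilon/2$ for $k$ sufficiently large, and $\limsup_k J_k \leq \varepsilon$. Since $\varepsilon>0$ was arbitrary, $J_k\to 0$ and the conclusion follows.

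The conceptual point, and the only subtlety, is that the separation hypothesis completely regularizes the kernel on the part of $D_k$ that stays in compact sets, which is precisely the region where the restriction $s<1/2$ was needed in Theorem \ref{N2}. No obstacle remains: the argument is uniform in $s\in (0,1)$ and in fact produces full convergence of $\lambda_1^k$ (so the subsequence in the statement is a cautious hedge rather than a technical necessity).
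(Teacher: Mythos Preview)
Your proposal is correct and follows exactly the paper's own argument: the proof there consists of a single sentence observing that in the estimate \eqref{I2} one may replace the integrability lemmas by the trivial bound $|x-y|\geq\delta$, giving $I_k^2\leq C\delta^{-(N+2s)}|D_k\cap B_R|$. Your remark that the subsequence in the statement is unnecessary is also on point, since the converse implication in \eqref{equivPsi} yields full convergence.
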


\begin{proof}
The result follows just by noticing that, for $k$ large enough, \eqref{I2} can be replaced by
$$I_k^2\leq C\delta^{-(N+2s)} |D_k\cap B_R|,$$
since $|x-y|\geq \delta$ whenever $x\in D_k\cap B_R$ and $y\in\Omega$.
\end{proof}

Finally, to study the case of Dirichlet sets arbitrarily close to $\Omega$, we introduce the following condition:
\begin{equation*}
(C) \qquad \qquad \lim_{k\to\infty}\int_{D_k}\int_\Omega \frac 1{|x-y|^{N+2s}}dydx=0.
\end{equation*}

\begin{Proposition}\label{condC}
Fix $s\in (0,1)$. Let $\Omega,  D_k, N_k$ be as in (\ref{nkdk}) and $\{\lambda^k_1\}_k$, $\{u^k_1\}_k$ the corresponding sequences of eigenvalues and eigenfunctions as before. Then, if $(C)$ holds for  $s$, we have
$$\lim\limits_{k\rightarrow \infty}\lambda_{1}^{k}=0.$$
\end{Proposition}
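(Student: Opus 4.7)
The plan is to follow the same scheme as in Theorem \ref{N2}, but to exploit condition $(C)$ so that we no longer need to split $D_k$ into $D_k\cap B_R$ and its complement, nor to invoke Lemma \ref{regularidad} (which was the source of the restriction $s<1/2$ there).

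First, I would appeal to the converse direction of the equivalence \eqref{equivPsi}, namely that in order to conclude $\lambda_1^k\to 0$ it suffices to show
\[
\lim_{k\to\infty}\int_{D_k}\int_\Omega \frac{\psi_1(x)\,u_1^k(y)}{|x-y|^{N+2s}}\,dy\,dx \;=\;0,
\]
where $\psi_1\equiv|\Omega|^{-1/2}$ is the first Neumann eigenfunction from \eqref{probPsi}. This converse is obtained by the same weak-compactness argument that was used in the proof of Theorem \ref{theoNeu} to pass from $(A')$ to $(A)$: along any subsequence we may extract a further subsequence of $\{u_1^k\}$ converging weakly in $H^s(\Omega)$ and strongly in $L^2(\Omega)$ to some $u^\ast\gneq 0$, and Proposition \ref{u1k} iv) ensures the uniform bound $\int_\Omega \psi_1\,u_1^k\,dx \geq c\,|\Omega|^{1/2}>0$, which prevents the test integral from vanishing spuriously.

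Next, the reduction to condition $(C)$ is immediate. Since $\psi_1$ is constant and, by Proposition \ref{u1k} iii), there exists $M>0$ independent of $k$ with $\|u_1^k\|_{L^\infty(\Omega)}\le M$, we can bound the integrand pointwise and obtain
\[
\int_{D_k}\int_\Omega \frac{\psi_1(x)\,u_1^k(y)}{|x-y|^{N+2s}}\,dy\,dx \;\le\; \frac{M}{|\Omega|^{1/2}}\int_{D_k}\int_\Omega \frac{dy\,dx}{|x-y|^{N+2s}}.
\]
By hypothesis $(C)$ the right-hand side tends to $0$ as $k\to\infty$, which gives the desired conclusion.

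There is essentially no obstacle in the plan: condition $(C)$ is designed precisely to bypass the only delicate point of Theorem \ref{N2}, i.e.\ the local integrability near $\partial\Omega$ of $|x-y|^{-(N+2s)}$, which forced $2s<1$ via Lemma \ref{regularidad}. The only item that deserves care in a fully written proof is to record explicitly the converse part of the equivalence \eqref{equivPsi} for arbitrary $s\in(0,1)$, but this is a verbatim transcription of the corresponding step in Theorem \ref{theoNeu}, with $\varphi_1$ replaced by $\psi_1$ and with the roles of Neumann and Dirichlet sets interchanged.
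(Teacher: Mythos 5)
Your proof is correct, but it takes a genuinely different route from the one the paper gives for this Proposition. You reduce the claim to showing that
\[
\int_{D_k}\int_\Omega \frac{\psi_1(x)\,u_1^k(y)}{|x-y|^{N+2s}}\,dy\,dx\longrightarrow 0,
\]
bound this by $\|u_1^k\|_{L^\infty}\,|\Omega|^{-1/2}\int_{D_k}\int_\Omega|x-y|^{-(N+2s)}\,dy\,dx$ via Proposition~\ref{u1k}~iii), and conclude by condition $(C)$ together with the converse implication in \eqref{equivPsi}. In effect this is the proof of Theorem~\ref{N2} with the near/far decomposition and Lemma~\ref{regularidad} stripped out, since $(C)$ hands you exactly the integrability that those tools were there to establish. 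The paper instead argues variationally: it extracts a subsequence realizing $\limsup\lambda_1^k=\lambda^*$, passes to the limit in the weak formulation \eqref{C1}, uses $(C)$ to annihilate the $\int_{D_k}\varphi\,\mathcal{N}_s u_1^k$ term, and identifies the weak limit $u^*$ as a nontrivial ($\|u^*\|_{L^2}=1$) eigenfunction of the pure Neumann problem with eigenvalue $\lambda^*$, forcing $\lambda^*=0$. Your route is shorter and leans on the algebraic Gauss-type identity; the paper's route re-derives nothing and instead produces the limiting eigenproblem explicitly, which is perhaps more robust if one wants a priori to see what $u^*$ satisfies. One small redundancy in your write-up: for the converse of \eqref{equivPsi} you invoke both weak compactness and Proposition~\ref{u1k}~iv), but since $\psi_1$ is constant, the uniform lower bound $\int_\Omega\psi_1 u_1^k\,dx\geq c\,|\Omega|^{1/2}$ from item~iv) is already enough; the weak-compactness step is only genuinely needed in Theorem~\ref{theoNeu}, where the analogous quantity involves the non-constant $\varphi_1$.
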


\begin{remark}
As one can easily see, condition $(C)$ is slightly stronger than the condition $(B)$ defined in the statement of Theorem \ref{theoNeu}. This is because, assuming $\Omega \subset B_R$, we have
$$
\int_{D_k}\int_\Omega \frac 1{|x-y|^{N+2s}}dydx\ge \int_{D_k\cap B_R}\int_\Omega \frac 1{|x-y|^{N+2s}}dydx\ge
\frac{|\Omega| }{(2R)^{N+2s}}\left |D_k\cap B_R\right |.
$$
On the other hand, observe that in the situation of Theorem \ref{N2} ($0<s<1/2$) and Proposition \ref{DirFar} ($D_k$ \lq\lq away" from
$\Omega$), $(B)$ and $(C)$ are in fact equivalent. 
\end{remark}

\vskip 3mm

\noindent \textit{Proof of the Proposition:} Since the  $\lambda^k_1$'s are positive and bounded, we only need to prove that
\begin{equation*}
\limsup_{k\to\infty} \lambda^k_1=0.
\end{equation*}
We take a subsequence $\{\lambda^{k_j}_1\}_j$ converging to the value $\lambda^*=\limsup_{k\to\infty} \lambda^k_1$. By taking a sub-subsequence if needed, we can assume that the corresponding sequence of eigenfunctions
$\{u^{k_j}_1\}_j$ converges weakly in $H^s(\Omega)$ to the function $u^*$ obtained in Proposition \ref{weakly}. Now, given a (bounded) test function $\varphi$ we have
\begin{equation}\label{C1}
\frac{a_{N,s}}{2}\iint_{Q_\Omega}\frac{(u^{k_j}_1(x)-u^{k_j}_1(y))(\varphi(x)-\varphi(y))}{|x-y|^{N+2s}}\,dx dy
= \lambda^{k_j}_1\int_\Omega \varphi \,u^{k_j}_1\,dx -\int_{D_{k_j}} \varphi \,\mathcal{N}_su^{k_j}_1\, dx.
\end{equation}
Using condition $(C)$, we have
$$
\lim_{j\to\infty} \left| \int_{D_{k_j}} \varphi \,\mathcal{N}_s u^{k_j}_1\, dx\right|
\le \lim_{j\to\infty}\int_{D_{k_j}}\int_\Omega \frac {|\varphi(x)|\, u^{k_j}_1(y)}{|x-y|^{N+2s}}dydx
\le C  \lim_{j\to\infty}\int_{D_{k_j}}\int_\Omega \frac 1{|x-y|^{N+2s}}dydx=0.
$$
So, taking limits in (\ref{C1}) we obtain
\begin{equation}
\frac{a_{N,s}}{2}\iint_{Q_\Omega}\frac{(u^*(x)-u^*(y))(\varphi(x)-\varphi(y))}{|x-y|^{N+2s}}\,dx dy
= \lambda^*\int_\Omega \varphi \,u^*\,dx
\end{equation}
Since we also have
\begin{equation}
\mathcal{N}_su^*(x)=0, \quad \mbox{a.e. on } \Omega^c,
\end{equation} we deduce that $u^*$ is a solution to the problem
\begin{equation*}
\left\{\begin{array}{rcll}
(-\Delta)^{s} v&=&\lambda^* v&\inn \Omega,\\
\mathcal{N}_{s} v&=&0&\inn \Omega^c.
\end{array}\right.
\end{equation*}
Hence, either $u^*\equiv 0$, a contradiction with the fact that $\dint_\Omega \left(u^*\right)^2 dx=1$, or $\lambda^*=0$ as we wanted.

\qed

\medskip

As an example of a configuration of $D_k$'s, $N_k$'s for which Proposition \ref{condC} applies, we have the following

\begin{Corollary}\label{coro}
Let $\Omega$ be convex. Consider a collection of open balls $\{D_k\}$ arbitrarily distributed on $\Omega^c$ and assume that $\lim_{k\to\infty} |D_k|=0$. Then, if $N\ge 3$ and $0<s<1$ of if $N=2$ and $0<s<3/4$ we have for the corresponding sequence of eigenvalues $\{\lambda^{k}_1\}_j$ that
$$
\lim_{k\to\infty} \lambda^{k}_1=0.
$$
\end{Corollary}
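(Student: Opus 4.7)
The plan is to invoke Proposition~\ref{condC}, so the task reduces to verifying condition $(C)$, namely
$$
\lim_{k\to\infty}\int_{D_k}\int_\Omega \frac{dy\,dx}{|x-y|^{N+2s}}=0.
$$
Writing $D_k=B_{r_k}(x_k)$, the hypothesis $|D_k|\to 0$ gives $r_k\to 0$, so it will suffice to show that the double integral above is bounded by $C\, r_k^{N-2s}$ uniformly in the position $x_k$, since $N-2s>0$ in the stated range.

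First I would exploit the convexity of $\Omega$. Since both $\Omega$ and $D_k$ are open convex sets with $D_k\cap\Omega=\emptyset$, and since $D_k$ being open forces $D_k\subset\mathbb{R}^N\setminus\overline\Omega$, there is a separating hyperplane. Choose coordinates so that it is $\{y_1=0\}$, with $\Omega\subset\{y_1<0\}$ and $D_k\subset\{x_1>0\}$. For any $x\in D_k$ the inner integral is bounded by the half-space integral, which a standard scaling computation evaluates to
$$
\int_\Omega \frac{dy}{|x-y|^{N+2s}}\leq \int_{\{y_1<0\}}\frac{dy}{|x-y|^{N+2s}}=\int_{x_1}^{\infty}\!\!\int_{\R^{N-1}}\frac{dz\,du}{(u^2+|z|^2)^{(N+2s)/2}}=\frac{C_{N,s}}{x_1^{2s}}.
$$

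The main point, and the step requiring the dimensional constraint, is then to bound
$$
\int_{B_{r_k}(x_k)}\frac{dx}{x_1^{2s}}
$$
uniformly in the position of $x_k$ (subject to $x_k\in\{x_1\geq r_k\}$). The worst case is a ball tangent to the separating hyperplane, $x_k=(r_k,0,\ldots,0)$. Setting $w=x-x_k$ and slicing orthogonally to $e_1$ yields
$$
\int_{B_{r_k}(x_k)}\frac{dx}{x_1^{2s}}=c_N\int_{-r_k}^{r_k}(r_k+w_1)^{(N-1)/2-2s}(r_k-w_1)^{(N-1)/2}dw_1,
$$
which by the substitution $w_1=r_k(2t-1)$ equals $C_{N,s}\,r_k^{N-2s}$ provided the Beta-type integrand is locally integrable at $w_1=-r_k$, i.e.\ $(N-1)/2-2s>-1$, equivalently $s<(N+1)/4$. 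This condition is automatic for $N\geq 3$ (since $s<1\leq (N+1)/4$) and reads $s<3/4$ for $N=2$, matching the hypotheses of the corollary exactly. For non-tangent positions the same bound holds with a smaller constant (far balls give polynomial decay in $|x_k|$), so the estimate is uniform.

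Combining these steps gives
$$
\int_{D_k}\int_\Omega \frac{dy\,dx}{|x-y|^{N+2s}}\leq C_{N,s}\, r_k^{N-2s}\longrightarrow 0,
$$
which establishes $(C)$ and hence, by Proposition~\ref{condC}, the conclusion. The only delicate step is the last estimate in the tangent case, where one must track precisely the order of vanishing of $x_1$ on the boundary of the ball; I do not expect any further obstacle beyond verifying the integrability threshold $s<(N+1)/4$.
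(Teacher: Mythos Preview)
Your proof is correct and follows essentially the same route as the paper: invoke Proposition~\ref{condC}, use convexity to separate $\Omega$ and $D_k$ by a hyperplane, bound the inner integral by $C/x_1^{2s}$, and integrate this over the worst-case tangent ball by slicing to obtain $Cr_k^{N-2s}$ under the threshold $s<(N+1)/4$. The only cosmetic differences are that the paper quotes Lemma~\ref{punctual} for the inner bound (rather than your direct half-space computation) and estimates the slice radius crudely by $(2rx_N)^{1/2}$ instead of keeping the exact Beta-integral form.
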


\begin{proof} From Proposition \ref{condC} we only need to show that $(C)$ holds for $\Omega$ and
$\{D_k\}$ in the range of $s$ and $N$ defined in the hypothesis; that is,
$$
\lim_{k\to\infty}\int_{D_k}\int_\Omega \frac 1{|x-y|^{N+2s}}dydx=0, \quad \mbox{for }
N\ge 3, 0<s<1, \mbox{or } N=2, 0<s<3/4.
$$
As in Section \ref{1}, we have the estimate
$$
\int_\Omega \frac 1{|x-y|^{N+2s}}dy \le \frac {C_N}{dist(x,\partial \Omega)^{2s}}.
$$
Now, since $\Omega$ and $D_k$ are disjoint open convex sets, they are separated by a hyperplane. By rotation invariance, we may assume with no loss of generality that this hyperplane is $x_N=0$ and that $D_k$ is simply
the ball of radius $r$ centered at $(0,\dots,0,r)$, which we will denote by $B_r^0$. Define now
$$
E(r)=\int_{B_r^0}  \frac {1}{dist(x,\partial \Omega)^{2s}} dx.
$$
Clearly, if $x=(x',x_N)\in B_r^0$ we have $dist(x,\partial \Omega) \ge x_N$. Therefore,
$$
E(r)\le \int_{B_r^0}  \frac {1}{x_N^{2s}} dx'dx_N=
\int_0^{2r}  \frac {1}{x_N^{2s}} \left|\{x'\in\mathbb R^{N-1}: (x',x_N)\in B_r^0\} \right|dx_N.
$$
Observe that if $x=(x',x_N)\in B_r^0$ then
$$
|x'|^2+(r-x_N)^2<r^2, \quad \mbox{i.e., } |x'|<\sqrt{2rx_N-x_N^2}.
$$
So, $\{x'\in\mathbb R^{N-1}: (x',x_N)\in B_r^0\}$ represents a ball in $\mathbb R^{N-1}$ of radius
$$
\sqrt{2rx_N-x_N^2}<(2rx_N)^{\frac 12}.
$$
Hence,
$$
E(r)\le C_N \int_0^{2r}  \frac {1}{x_N^{2s}} (2rx_N)^{\frac {N-1}2} dx_N=C_N' r^{N-2s},
$$
provided that $\displaystyle \frac{N-1}2-2s>-1$, or equivalently, $\displaystyle s<\frac{N+1}4$. If this is the case, then
$$
E(r)=o(1), \quad \mbox{as } r\longrightarrow 0,
$$
and the corollary follows.

\end{proof}

\medskip

\begin{remark}
The results contained in this section show the fundamental differences with the local case. In particular, Proposition \ref{DirFar} allows us to conclude that the local result by Denzler (Theorem \ref{denzlerInf}) does not hold in the non local case. Indeed, given a fixed value $\alpha$ (even infinity) we can find a configuration of domains whose measures tend to $\alpha$ such that the corresponding eigenvalues get as small as we want. This is done simply by {\it sending the Dirichlet sets to $\infty$} .
\end{remark}

\medskip

\begin{remark}
In Theorem \ref{denzlerSup}, J. Denzler shows also that for any given value, one can choose a configuration of  sets with Dirichlet condition so that one recovers the eigenvalue of the whole Dirichlet problem. That is, placing cleverly the Dirichlet sets along the boundary of $\Omega$, no matter how small they are, the eigenvalue of the mixed problem behaves like the Dirichlet one.

According to Proposition \ref{DirFar}, if this happens in the non local case it would have to be when the Dirichlet part touches the boundary of $\Omega$.
Corollary \ref{coro} shows an example where the sets can be placed touching $\partial \Omega$ but so that  the above phenomenon does not hold. Whether or not  this is true for any family of sets remains  an open question.
\end{remark}

%%%%%%%%%%%%%%%%%%%%%%%%%%%%%%%
%%%%%%%%%%%%%%%%%%%%%%%%%%%%%%%
%%%%%%%%%%%%%%%%%%%%%%%%%%%%%%%


\begin{thebibliography}{99}

\bibitem{ACP} {\sc B. Abdellaoui, E. Colorado, I. Peral}, {\em Effect of the boundary conditions in the behavior of the optimal constant of some Caffarelli-Kohn-Nirenberg inequalities. Application to some doubly critical nonlinear elliptic problems}, Adv. Differential Equations  11,  (2006)   667--720.

\bibitem {BFV} {\sc B. Barrios, A. Figalli, E. Valdinoci}, {\em Bootstrap regularity for integro differential operators and its application to nonlocal minimal surfaces} Ann. Scuola Norm. Sup. Pisa Cl. Sci. (5) 13 (2014), no 3, 609-639.

\bibitem {BM} {\sc B. Barrios, M. Medina}, {\em Strong maximum principles for fractional elliptic and parabolic problems with mixed boundary conditions}. Submitted.

\bibitem {CRS} {\sc L. Caffareli, J.-M. Roquejoffre,  O. Savin} {\em Nonlocal Minimal Surfaces},
Communications on Pure and Applied Mathematics, Vol. LXIII, (2010) 1111Ð1144.

\bibitem{CP} \textsc{E. Colorado, I. Peral,} \emph{Semilinear elliptic problems with mixed Dirichlet-Neumann boundary conditions.}
 J. Funct. Anal. 199, (2003)    468--507.

\bibitem{D}  {\sc J. Denzler}, {\em Bounds for the heat diffusion through windows of given area}, J. Math. Anal. Appl.  217,  (1998)     405--422.

\bibitem{D2}  {\sc J. Denzler}, {\em Windows of given area with minimal heat diffusion}, Trans. of the American Math. Soc.  351, (1999)    569--580.


\bibitem{DPV} {E. Di Nezza, G. Palatucci,  E. Valdinoci}, {\em Hitchhiker's guide to the fractional Sobolev
    spaces}, Bull. Sci. math. {\bf 136}, (2012)    521--573.


\bibitem {DRV} {\sc S. Dipierro,  X. Ros-Oton, E. Valdinoci}, {\em Nonlocal problems with Neumann boundary conditions}, Rev. Mat. Iberoam. To appear.

\bibitem{LPPS} \textsc{T. Leonori, I. Peral, A. Primo, F. Soria}, \textit{Basic estimates for solution of elliptic and parabolic equations for a class of nonlocal operators}, Discrete and Continuous
Dynamical Systems- A,  Volume 35, Number 12, (2015) 6031--6068.

\bibitem{R} {\sc X. Ros-Oton}, {\em Nonlocal elliptic equations in bounded domains: a survey},  Publ. Mat. {\bf 60} (2016) 3--26.

\bibitem{RS} {\sc X. Ros-Oton, J. Serra}, {\em The Dirichlet problem for the fractional Laplacian:
Regularity up to the boundary},  J. Math. Pures Appl. (9) {\bf 101}, (2014) 275--302.

\bibitem{ROS2} {\sc X. Ros-Oton, J. Serra},  {\em Boundary regularity estimates for nonlocal elliptic equations in $C^1$ and $C^{1,\alpha}$ domains}, Annali di Matematica (2017). doi:10.1007/s10231-016-0632-1

\end{thebibliography}
\end{document}